\newtheorem{theorem}{Theorem}[section]
\newtheorem*{thm}{Theorem}
\newtheorem{corollary}[theorem]{Corollary}
\newtheorem{proposition}[theorem]{Proposition}
\newtheorem{lemma}[theorem]{Lemma}
\newtheorem{definition}[theorem]{Definition}
\newtheorem{example}[theorem]{Example}
\newtheorem{remark}[theorem]{Remark}
\newcommand{\C}{\ensuremath{\mathbb{C}}}
\newcommand{\R}{\ensuremath{\mathbb{R}}}
\newcommand{\Z}{\ensuremath{\mathbb{Z}}}
\newcommand{\Poli}{\ensuremath{\Gamma_{+}}}
\newcommand{\On}{\ensuremath{\mathcal{O}_{n}}}
\newcommand{\OX}[1]{\ensuremath{\mathcal{O}_{X(#1)}}}
\newcommand{\coned}[1]{\ensuremath{c\widecheck{on}e(#1)}}
\begin{document}

	\title[Newton polyhedra and the integral closure of ideals]{Newton polyhedra and the integral closure of ideals on affine toric varieties}
	\author{Amanda S. Araújo }
	\author{Thaís M. Dalbelo}
	\author{Thiago da Silva}

	\begin{abstract}
		In this work, we extend some results from polynomial rings to the broader context of semigroup rings. More precisely, we generalize Saia's characterization of Newton non-degenerate ideals to the context of ideals in $\mathcal{O}_{X(S)}$, where $X(S)$ is an affine toric variety defined by the semigroup $S\subset \mathbb{Z}^{n}_{+}$. We explore the relationship between the integral closure of ideals and the Newton polyhedron. We introduce and characterize non-degenerate ideals, showing that their integral closure is generated by specific monomials related to the Newton polyhedron. 
	\end{abstract}
	
	\maketitle
	\tableofcontents
	
	\section*{Introduction}
	
	The search for elements and tools to describe equisingularity conditions in families of analytic varieties is one of the main questions in Singularity Theory. The theory of the integral closure of ideals and modules provides a handy tool for studying equisingularity problems. For instance, in \cite{Tessier1} Teissier used the integral closure of the ideals to study the equisingularity
	of hypersurface germ families. Inspired by Teissier's work Gaffney used the integral closure to study the equisingularity of families of complete intersection with isolated singularities (ICIS) \cite{Gaffney1992}.
	
	Although essential, it is not easy to compute the integral closure of an ideal. However, there are some cases where this computation is possible. For example, if $I$ is an ideal generated by monomials on  $\mathcal{O}_n$ (the ring of germs of analytic functions $f: (\mathbb{C}^n,0) \to (\mathbb{C},0)$), the integral closure $\overline{I}$ of $I$ is generated by the monomials $x^k = x^{k_1} \dots x^{k_n}$, $k$ belonging to the Newton polyhedron of $I$. This result was extended by Saia \cite{Marcelo}, thus characterizing the class of Newton non-degenerate ideals $I \subset \mathcal{O}_n$. In \cite{Carles2004} Bivià-Ausina computes the integral closure of Newton non-degenerate submodules $M \subset \mathcal{O}_n^p$, $p \geq 1$.
	Therefore, the author characterize a class of submodules  $M \subset \mathcal{O}_n^p$ such that the integral
	closure $\overline{M}$ of $M$ is easily computable.
	
	In this work we generalize the results presented by Saia in \cite{Marcelo} to the case of non-degenerate ideals in $\mathcal{O}_{X(S)}$, in which $X(S)$ is the affine toric variety defined by the semigroup $S \subset \mathbb{Z}^n_{+}$. %\textcolor{red}{The main result presented in this paper is interesting and contributes to ongoing efforts by algebraists to extend results from polynomial rings to the broader context of semigroup rings.}
	
	This work is organized as follows. In Section 1, we introduce some definitions and properties related to affine toric varieties and Newton polyhedron defined in the $cone(S)$. In Section 2, we present a relationship between the integral closure of ideals in $\mathcal{O}_{X(S)}$ and the Newton polyhedron of $I$, proving the following theorem:
	
	\begin{thm}{(Theorem \ref{22})}
		Let $I\subset \mathcal{O}_{X(S)}$ be a monomial ideal. If $h \in \overline{I}$ then $supp(h) \subset \Gamma_+(I)$.
	\end{thm}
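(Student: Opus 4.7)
The plan is to use the standard valuative strategy: exploit the integral equation of $h$ over $I$ to bound a family of weighted orders of $h$, then read off the support condition from the description of $\Gamma_+(I)$ as an intersection of supporting half-spaces. Since $h\in\overline{I}$, there exist $a_j\in I^j$, $j=1,\dots,N$, with
\[
h^N+a_1h^{N-1}+\cdots+a_{N-1}h+a_N=0.
\]

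For every vector $v$ in the dual cone of $cone(S)$ (more precisely, each inner normal to a face of $\Gamma_+(I)$ lying in the relative interior of $cone(S)^{\vee}$), I would define on $\mathcal{O}_{X(S)}$ the weighted order
\[
ord_v(f)=\min\{\langle\alpha,v\rangle:\alpha\in supp(f)\},
\]
and set $m_v:=ord_v(I)$, which by monomiality of $I$ is computed on a monomial generating set. Standard properties give $ord_v(fg)\geq ord_v(f)+ord_v(g)$ and $ord_v(f+g)\geq\min\{ord_v(f),ord_v(g)\}$, so $ord_v(a_j)\geq j\,m_v$. Substituting into the integral equation,
\[
N\cdot ord_v(h)\geq\min_{1\leq j\leq N}\bigl(j\,m_v+(N-j)\,ord_v(h)\bigr),
\]
and rearranging (the minimum is attained at some $j^{*}\geq 1$, which gives $j^{*}\,ord_v(h)\geq j^{*}m_v$) yields $ord_v(h)\geq m_v$.

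By the description of the Newton polyhedron recalled in Section~1, $\Gamma_+(I)$ is the intersection over all such $v$ of the half-spaces $\{\alpha\in cone(S)_{\R}:\langle\alpha,v\rangle\geq m_v\}$; since $supp(h)\subset cone(S)\cap\mathbb{Z}^n$ automatically, the bound $ord_v(h)\geq m_v$ for every relevant $v$ forces each $\alpha\in supp(h)$ into $\Gamma_+(I)$. The main obstacle, and the point where the toric setting genuinely differs from Saia's classical case, is to pin down the correct family of weight vectors $v$: because $\Gamma_+(I)$ lives in the (possibly non-simplicial, not full-dimensional) cone $cone(S)$, one must verify that the weighted orders indexed by inner normals to the bounded faces of $\Gamma_+(I)$ really do cut out $\Gamma_+(I)$ inside $cone(S)$, and that these orders are well-defined valuations on $\mathcal{O}_{X(S)}$ rather than merely on the ambient $\mathcal{O}_n$.
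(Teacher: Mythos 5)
Your overall strategy---bounding the weighted orders $ord_v(h)$, $v\in\coned{S}$, by means of the integral dependence equation and then invoking the half-space description of $\Gamma_{+}(I)$---is a genuinely different route from the paper's proof, which first reduces to monomials (using that the integral closure of a monomial ideal in the semigroup ring is again monomial) and then applies the analytic valuative criterion along the monomial curves $\varphi(t)=(t^{\langle b_1,\gamma\rangle},\dots,t^{\langle b_r,\gamma\rangle})$ attached to face normals $\gamma$; if completed, your argument would not even need the monomial hypothesis on $I$, since $ord_v(a)\geq j\,\ell(v,\Gamma_{+}(I))$ for $a\in I^{j}$ holds for arbitrary ideals. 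However, as written there is a real gap at the decisive step. From $h^{N}=-(a_1h^{N-1}+\cdots+a_N)$ and the properties you actually state (superadditivity of $ord_v$ on sums and products) one only obtains $ord_v(h^{N})\geq\min_{j}\bigl(j\,m_v+(N-j)\,ord_v(h)\bigr)$; to replace the left-hand side by $N\,ord_v(h)$, as your displayed inequality does, you need the reverse inequality $ord_v(h^{N})\leq N\,ord_v(h)$, i.e.\ multiplicativity of $ord_v$ at least on powers of $h$. This is precisely the ``well-defined valuation on $\mathcal{O}_{X(S)}$'' issue that you yourself flag as the main obstacle and then leave unresolved, so the toric-specific crux of the proof is missing.

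Both missing points are true, and closing the gap requires the following. For well-definedness, do not read $supp(h)$ off an arbitrary representative $f$ on $\C^{r}$, but use the intrinsic expansion $h=\sum_{s\in S}c_s\chi^{s}$ in the semigroup ring: two representatives differ by an element of $I_S$, which is generated by binomials identifying monomials with the same image in $S$, so the coefficients $c_s$ are well defined. For multiplicativity, note that for $v\in\coned{S}$ the minimum defining $ord_v$ is attained (strong convexity of $cone(S)$ leaves only finitely many weight values below any bound), so the $v$-initial form $in_v(h)$ makes sense and has pure weight $ord_v(h)$; the weight-$\bigl(ord_v(f)+ord_v(g)\bigr)$ part of $fg$ equals $in_v(f)\,in_v(g)$, which is nonzero because $I_S$ is prime and hence $\mathcal{O}_{X(S)}$ is a domain. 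This gives $ord_v(fg)=ord_v(f)+ord_v(g)$, in particular $ord_v(h^{N})=N\,ord_v(h)$, after which your rearrangement and the concluding half-space argument go through; note also that your worry about pinning down a special family of normals dissolves if you simply run the argument for every $v\in\coned{S}$ and quote the lemma of Section~1 directly.
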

	
	In Section 3, we introduce the class of non-degenerate ideals of  $\mathcal{O}_{X(S)}$  and establish a relationship between the Newton polyhedron of the ideal $I$ and $C(\overline{I})$, which is the Newton polyhedron of the ideal generated by all monomials that belong to the integral closure of the ideal $I$.
	
	In Section 4, we characterize these ideals by showing that their 
	integral closure is generated by specific monomials related to the Newton polyhedron, thereby proving our main theorem: 
	\begin{thm}{(Theorem \ref{theo48})}
		Let $I$ be an ideal of $\mathcal{O}_{X(S)}$. Then $I$ is non-degenerate if and only if $\Gamma_{+}(I)=C(\overline{I}).$
	\end{thm}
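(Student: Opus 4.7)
Theorem~\ref{22} already supplies the inclusion $C(\overline{I}) \subset \Gamma_+(I)$ for free: every monomial of $\overline{I}$ has its exponent in $\Gamma_+(I)$, so the same is true of the Newton polyhedron of the monomial sub-ideal these monomials generate. The content of Theorem~\ref{theo48} therefore reduces to the equivalence ``$\Gamma_+(I) \subset C(\overline{I})$ iff $I$ is non-degenerate,'' and since a Newton polyhedron is determined by its vertices, this amounts to checking vertex-by-vertex that $x^v \in \overline{I}$ for every vertex $v$ of $\Gamma_+(I)$. The natural tool is the valuative/curve characterisation of the integral closure in the analytic local ring $\mathcal{O}_{X(S)}$: a germ $h$ lies in $\overline{I}$ if and only if $\nu_\varphi(h) \geq \nu_\varphi(I)$ for every analytic arc $\varphi : (\mathbb{C},0) \to (X(S),0)$.

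For the direction $(\Leftarrow)$ I would argue by contrapositive. If $I$ is degenerate on a compact face $\sigma$ of $\Gamma_+(I)$, there exists a point $a$ in the torus of $X(S)$ with $f_\sigma(a) = 0$ for all $f \in I$; let $w$ be the integer weight defining $\sigma$ and consider the monomial arc $\varphi(t) = (a_1 t^{w_1},\dots,a_n t^{w_n})$, choosing $w$ in the appropriate subcone so that $\varphi$ lands in $X(S)$. For any vertex $v \in \sigma$ one has $\nu_\varphi(x^v) = \langle w, v \rangle$, while the simultaneous vanishing $f_\sigma(a) = 0$ forces $\nu_\varphi(f) > \langle w, v \rangle$ for every $f \in I$. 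The valuative criterion then yields $x^v \notin \overline{I}$, so $v \notin C(\overline{I})$, contradicting the assumed equality.

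For $(\Rightarrow)$, assume $I$ is non-degenerate, fix a vertex $v$ of $\Gamma_+(I)$, and verify the valuative criterion for $x^v$. Given an arc $\varphi$, extract its leading weight $w$ and leading coefficient $a$, and let $\sigma_w$ be the face of $\Gamma_+(I)$ on which $\langle w,\cdot \rangle$ attains its minimum $m$; then $\nu_\varphi(x^v) = \langle w,v \rangle \geq m$ because $v \in \Gamma_+(I)$. Non-degeneracy applied to $\sigma_w$ (after a small perturbation of $w$ if needed to reduce to a compact face) produces some $f \in I$ with $f_{\sigma_w}(a) \neq 0$, and the weighted homogeneity of $f_{\sigma_w}$ gives $\nu_\varphi(f) = m \leq \nu_\varphi(x^v)$. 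This verifies $x^v \in \overline{I}$, hence $v \in C(\overline{I})$.

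The principal obstacle is executing this curve/valuation argument inside the possibly singular ambient $X(S)$ rather than in $\mathbb{C}^n$: one must identify exactly which analytic arcs are admissible in $(X(S),0)$, verify that the induced weights span enough of the dual of $\text{cone}(S)$ to detect integral dependence, and reconcile the face of $\Gamma_+(I)$ selected by an arc with the intrinsic notion of face used to phrase non-degeneracy. Once this dictionary between admissible arcs and faces of $\Gamma_+(I)$ is pinned down, both implications reduce to the elementary weight comparisons sketched above.
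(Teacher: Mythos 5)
Your strategy replaces the paper's toroidal-embedding machinery by a direct use of the valuative criterion, and half of it is sound. The degenerate direction works essentially as in Lemma~\ref{lemma34}: given a compact face $\Delta=\Delta(w)$ on which a (hence every) generating system degenerates at a point $a\in(\C^{*})^{n}$, the arc $t\mapsto\phi\big((a_{1}t^{w_{1}},\dots,a_{n}t^{w_{n}}),(1,\dots,1)\big)=(a^{b_{1}}t^{\langle w,b_{1}\rangle},\dots,a^{b_{r}}t^{\langle w,b_{r}\rangle})$ lies in $X(S)$, passes through $0$ (compactness of $\Delta$ forces $\langle w,b_{j}\rangle>0$ for every $j$), and the vanishing of the face parts at $a$ gives $t$-order strictly larger than $\ell(w)$ for every element of $I$. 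One repair is needed: from $x^{v}\notin\overline{I}$ alone you cannot conclude $v\notin C(\overline{I})$, since $C(\overline{I})$ is a convex hull and may well contain points whose monomials are not integral over $I$. But your own arc proves the stronger fact that every exponent $\lambda_{1}b_{1}+\cdots+\lambda_{r}b_{r}$ of a monomial $x^{\lambda}\in\overline{I}$ satisfies $\langle w,\cdot\rangle>\ell(w)$; this open half-space is convex, so $C(\overline{I})$ is contained in it, while the vertex $v$ of the degenerate face satisfies $\langle w,v\rangle=\ell(w)$. With that adjustment this half is fine, and the vertex-by-vertex reduction of the other inclusion is also legitimate (a vertex of $\Gamma_{+}(I)=C(\overline{I})$ must be an exponent of a monomial of $\overline{I}$, and on $X(S)$ the monomial depends only on the point $m_{1}b_{1}+\cdots+m_{r}b_{r}$).

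The genuine gap is in the implication ``non-degenerate $\Rightarrow\Gamma_{+}(I)\subseteq C(\overline{I})$''. The valuative criterion (Proposition~\ref{prop21}) quantifies over \emph{all} arcs $\varphi:(\C,0)\to(X(S),0)$, and your extraction of a leading weight $w$ and leading coefficient $a$ only makes sense for arcs meeting the dense orbit $\mathcal{O}$: for those, the coordinate orders are $\langle w,b_{j}\rangle$ with $w\in\coned{S}$, the face $\Delta(w)$ is automatically compact, and $a\in(\C^{*})^{n}$, so non-degeneracy does produce a generator of order exactly $\ell(w)$. But an arc contained in the torus-invariant boundary $X(S)\setminus\mathcal{O}$ (some coordinates identically zero) has no leading coefficient in $(\C^{*})^{n}$ and no compact face attached to it; ``a small perturbation of $w$'' cannot help, because the degeneration sits in the arc rather than in the weight, and the non-degeneracy hypothesis --- which constrains only compact faces and zeros in the torus --- gives no comparison between the orders of $x^{v}$ and of $I$ along such arcs. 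This is exactly where the paper does its real work: the charts $\pi_{\sigma}:\C^{n}(\sigma)\to X(S)$ and the factorization (\ref{eq4}) turn the generators into a fixed monomial times the functions $h_{i}$, Lemmas~\ref{lemma45} and~\ref{lemma47} convert non-degeneracy into a uniform bound $\sup_{i}|h_{i}|\geq d>0$ near $\pi_{\sigma}^{-1}(0)$, and the boundary arcs are disposed of by invoking Lemma 3.3 of \cite{Gaffney1999} (integral dependence can be tested on curves lying in a Zariski-open dense subset) in the Proposition preceding Lemma~\ref{lemma45}. Your closing paragraph defers precisely this ``dictionary between admissible arcs and faces''; without a substitute for that step --- Gaffney's lemma, the toroidal charts, or some other argument controlling arcs in the boundary --- the hard direction of the theorem remains unproved.
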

	
	Ultimately, we use  Theorem \ref{theo48} to present an example of a family which satisfy the Whitney conditions (see Example \ref{ExW}). More precisely, we determine that a stratification, different from the one given by the torus action orbits, is a Whitney stratification.
	
	\section{Preliminary Notions and Results}

	For the convenience of the reader and to fix some notation we review some 
	general facts  in order
	to establish our results.
	
	\subsection{Toric varieties.}
	
	We introduce some basic concepts about toric varieties. These concepts can be found in  \cite{bra, Cox}.

	Let $S \subset \mathbb{Z}^n$ be a semigroup generated by a finite set $S_0 = \{b_1,\dots,b_r\} \subset \mathbb{Z}^n$ satisfying $\mathbb{Z}S_0 = \mathbb{Z}^n$.
	
	The set $S_0$ induces a group homomorphism $\pi_{S_0} : \mathbb{Z}^r \to \mathbb{Z}^n$ given by
	$$
	\pi_{S_0}((\alpha_1,\dots,\alpha_r)) = \alpha_1 b_1 + \dots + \alpha_r b_r.
	$$
	
	Given $\alpha = (\alpha_1, \dots, \alpha_r) \in \ker(\pi_{S_0})$ we set 
	$$
	{\alpha}_+ = \displaystyle \sum_{\alpha_i > 0} \alpha_i e_i \ \ \ \ \text{and} \ \ \ \ {\alpha}_{-} = -\displaystyle \sum_{\alpha_i < 0} \alpha_i e_i,
	$$
	where $e_1, \dots, e_r$ are the elements of the standard basis of $\mathbb{R}^r$. Let us observe that $\alpha = \alpha_{+} - \alpha_{-}$ and that $\alpha_{+}, \alpha_{-} \in \mathbb{N}^r$.
	
	Consider the ideal 
	\begin{equation}\label{nucleo}
		I_S = \langle x^{\alpha_{+}} - x^{\alpha_{-}}; \ \ \alpha \in \ker(\pi_{S_0}) \rangle \subset \mathbb{C}[x_1,\dots,x_r],
	\end{equation}
	where $x^{\beta} = x_1^{\beta_1} \dots x_r^{\beta_r}$ with $\beta = (\beta_1, \dots, \beta_r) \in \mathbb{N}^r$.

	\begin{definition}
		The $n$-dimensional {affine toric variety} $X(S)$ is defined by the 
		zero set of $I_S$, i.e.
		$X(S)=V(I_S) \subset \mathbb{C}^r$.
	\end{definition} 
	
	The ideal $I_S$ is a prime ideal (see \cite[Proposition $1.1.9$]{Cox}) and  $X(S) $ is the affine variety defined by $I_S$, which is not necessarily
	normal.

	There is a relationship between the toric variety $X(S)$ associated with $S$ and the algebraic $n$-dimensional torus $\mathbb{T} = (\C^{*})^{n}$. More precisely, the action of the torus $\mathbb{T}$ on $X(S)$ is given by
	\begin{equation}\label{action}
		\begin{matrix}\phi: &  (\C^{*})^n \times X(S) &\longrightarrow& X(S)\\ 
			&(t,x)&\longmapsto& (t^{b_1} x_1,\dots, t^{b_r} x_r)\end{matrix}
	\end{equation}
	where $t=(t_{1},\dots,t_{n})$, $b_{i}=(b^{1}_{i},\dots,b^{n}_{i})$ and $x=(x_{1},\dots,x_{r})$.

	Moreover, $\phi$ has an open and dense orbit $\mathcal{O}$ in $X(S)$ that is diffeomorphic to $(\C^*)^{n}$. Thus the $n$-dimensional toric variety $X(S)$ contains the torus $\mathbb{T}$ as a Zariski open dense subset.\\
	
	In the following, we introduce the concepts of cone and dual cone, which are studied in convex geometry. Those elements will be necessary to the definition of Newton polyhedra which we will use in this work.
	\begin{definition}
		Let $\{b_{1},\dots,b_{r}\}\subset \Z^{n}_{+}$ be a finite generator set of $S$. The convex polyhedral cone associated with $S$ in $\R^{n}$ is the set 
		$$ cone(S)=\Bigg\{ \sum_{i=1}^{r} \lambda_{i}b_{i} \ \vert \ \lambda_{i} \in \R \ , \ \lambda_{i}\geq 0 \Bigg\}.$$
		The vectors $b_{1},\dots,b_{r}$ are called generators of the  $cone(S)$. Also, set $cone(\emptyset)=\{0\}$.
	\end{definition}
	
	In what follows, we will consider that  $dim( cone(S))=n$ and that $cone(S)$ is strongly convex, i.e. $cone(S) \cap (- cone(S)) = \{0\}$.
	
	Let $(\R^{n})^{\ast}$ be the dual space of $\R^{n}$. To each cone, we associate the dual cone, defined by $$\coned{S}=\{ u \in (\R^{n})^{*} : \langle u,v \rangle \geq 0 , \forall v \in cone(S)\}.$$

	\subsection{Newton polyhedra.} In this section we present the definition of Newton polyhedron of ideals of the ring $\mathcal{O}_{X(S)}$. For the definition of Newton polyhedron of a function $f: X(S) \to \mathbb{C}$ we can refer to \cite{MT1} for instance.

	\begin{definition}
		The Newton polyhedron determined by $A \subset S$, denoted by $\Poli(A)$,  is the convex hull of the set  $\{k+v \ : \ k \in A , v \in cone(S)\}$. If $v\in \coned{S}$, we define $$\ell(v,\Poli(A))=\min\{ \langle  k,v\rangle \ : \ k \in \Poli(A) \}$$ and $$\Delta(v, \Poli(A))= \{k \in \Poli(A): \langle k,v\rangle=\ell(v,\Poli(A))\}.$$
		
		A subset $\Delta \subseteq \Poli(A)$ is called a face of $\Poli(A)$ when there exists  $v \in \coned{S}$ such that $\Delta= \Delta(v,\Poli(A))$.
	\end{definition}
	For the sake of simplicity, when it is clear from the context that the polyhedron $\Poli(A)$ is fixed, we will omit it from the notation and write $\Delta(v)$ and $\ell(v)$ instead of $\Delta(v,\Poli(A))$ and $\ell(v,\Poli(A))$.
	
	\begin{figure}[H]
		
		\begin{tikzpicture}[scale=0.6]
			
			\draw[->] (-0.5,0) -- (4,0);
			\draw[->] (0,-0.5) -- (0,4);

			\draw[lightgray, thick] (0,0) -- (2,4);
			
			\fill[lightgray, opacity=0.2] (0,0) -- (2,4) -- (4,4) --(4,0)  --cycle;
			%ponto
			
			\fill[black] (1.7,2.5) circle (0.6mm) ;
			\fill[black] (1.5,1) circle (0.6mm) ;
			\fill[black] (3,0.5) circle (0.6mm) ;
			
		\end{tikzpicture}
		\begin{tikzpicture}[scale=0.6]
			
			\draw[->] (-0.5,0) -- (4,0);
			\draw[->] (0,-0.5) -- (0,4);

			\draw[lightgray, thick] (0,0) -- (2,4);
			
			\fill[lightgray, opacity=0.2] (0,0) -- (2,4) -- (4,4) --(4,0)  --cycle;
			%ponto
			
			\fill[black] (1.7,2.5) circle (0.6mm) ;
			\fill[black] (1.5,1) circle (0.6mm) ;
			\fill[black] (3,0.5) circle (0.6mm) ;

			\fill[purple, opacity=0.2] (1.5,1) -- (3,4) -- (4,4) --(4,1) --cycle;
			\fill[purple, opacity=0.2] (1.7,2.5) -- (2.5,4) -- (4,4) --(4,2.5) --cycle;
			\fill[purple, opacity=0.2] (3,0.5) -- (4,2.5) -- (4,0.5) --cycle;
			
		\end{tikzpicture}
		\begin{tikzpicture}[scale=0.6]
			
			\draw[->] (-0.5,0) -- (4,0);
			\draw[->] (0,-0.5) -- (0,4);

			\draw[lightgray, thick] (0,0) -- (2,4);
			
			\fill[lightgray, opacity=0.2] (0,0) -- (2,4) -- (4,4) --(4,0)  --cycle;
			%ponto
			
			\fill[black] (1.7,2.5) circle (0.6mm) ;
			\fill[black] (1.5,1) circle (0.6mm) ;
			\fill[black] (3,0.5) circle (0.6mm) ;

			\fill[purple, opacity=0.2] (1.5,1) -- (3,4) -- (4,4) --(4,1) --cycle;
			\fill[purple, opacity=0.2] (1.7,2.5) -- (2.5,4) -- (4,4) --(4,2.5) --cycle;
			\fill[purple, opacity=0.2] (3,0.5) -- (4,2.5) -- (4,0.5) --cycle;

			\draw[purple, thick]  (1.7,2.5) -- (2.5,4);
			\draw[purple, thick] (1.7,2.5) -- (1.5,1);
			\draw[purple, thick] (3,0.5) -- (1.5,1);
			\draw[purple, thick] (4,0.5) -- (3,0.5);

			\pattern[pattern={hatch[hatch size=10pt, hatch linewidth=.2pt, hatch angle=90]} , pattern color=purple]  (1.7,2.5) -- (2.5,4)--(4,4)--(4,0.5)--(3,0.5)--(1.5,1)--cycle;
			
		\end{tikzpicture}
		\caption{ Newton polyhedron.}
		
	\end{figure}
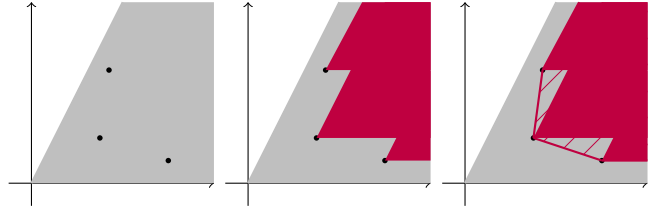
	Since a polyhedron can be described as the intersection of the half-planes defined by its faces, we have an important characterization.
	\begin{lemma}
		Let $\Poli(A) \subseteq cone(S)$ be the Newton polyhedron of a set $A \subset S$. Then 
		$$ \Gamma_{+}(A) = \{ x \in cone(S):  \langle x,v \rangle \geq \ell(v, \Gamma_{+}(A)), \mbox{ for all } v \in \coned{S}\}.$$
	\end{lemma}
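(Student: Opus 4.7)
The plan is to prove the two inclusions separately. The forward inclusion is immediate from the definitions: any $x \in \Gamma_+(A)$ lies in $cone(S)$ by hypothesis, and since $\ell(v,\Gamma_+(A))$ is defined as $\min\{\langle k,v\rangle : k \in \Gamma_+(A)\}$, we have $\langle x,v\rangle \geq \ell(v,\Gamma_+(A))$ for every $v \in \coned{S}$.

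For the reverse inclusion I would argue by contradiction. Suppose $x \in cone(S)$ satisfies $\langle x, v\rangle \geq \ell(v,\Gamma_+(A))$ for all $v \in \coned{S}$, but $x \notin \Gamma_+(A)$. Since $\Gamma_+(A)$ is a closed convex set (being the Minkowski sum of the convex hull of the generating data with the closed convex cone $cone(S)$), the Hahn--Banach separation theorem furnishes a linear functional $v^{\ast} \in (\R^{n})^{\ast}$ and a constant $c \in \R$ with
$$\langle x, v^{\ast}\rangle < c \leq \langle y, v^{\ast}\rangle \quad \text{for every } y \in \Gamma_+(A).$$

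The main obstacle, and the only nontrivial step, is to verify that this separating functional $v^{\ast}$ automatically lies in $\coned{S}$. For this I would exploit the recession property $\Gamma_+(A) + cone(S) \subseteq \Gamma_+(A)$ that is built into the definition of the Newton polyhedron: fix any $y_0 \in \Gamma_+(A)$ and any $w \in cone(S)$, so that $y_0 + tw \in \Gamma_+(A)$ for all $t \geq 0$. The separation inequality then forces $\langle y_0, v^{\ast}\rangle + t\,\langle w, v^{\ast}\rangle \geq c$ for every $t \geq 0$, which is impossible unless $\langle w, v^{\ast}\rangle \geq 0$. As $w \in cone(S)$ was arbitrary, $v^{\ast} \in \coned{S}$. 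Applying the standing hypothesis to this particular $v^{\ast}$ then yields $\langle x, v^{\ast}\rangle \geq \ell(v^{\ast},\Gamma_+(A)) \geq c$, contradicting $\langle x, v^{\ast}\rangle < c$ and completing the proof.
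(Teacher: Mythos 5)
Your argument is essentially correct, and it is worth noting that the paper gives no proof at all: the lemma is stated as an immediate consequence of the standard fact that a polyhedron is the intersection of the half-spaces determined by its faces. Your route is different and more self-contained: the forward inclusion is, as you say, immediate from the definition of $\ell(v,\Gamma_{+}(A))$, and for the converse you replace the structural polyhedron fact by a Hahn--Banach separation argument, with the key (and correctly executed) observation that the recession property $\Gamma_{+}(A)+cone(S)\subseteq\Gamma_{+}(A)$ forces any separating functional to lie in $\coned{S}$, after which the standing inequality for that functional gives the contradiction. What the paper's tacit approach buys is brevity, since it leans on polyhedral combinatorics; what yours buys is an elementary convexity proof that only uses closedness and the recession cone, not the face structure.

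The one point you should tighten is the closedness of $\Gamma_{+}(A)$. Your parenthetical justification --- that a Minkowski sum of a convex set with a closed convex cone is closed --- is not a valid general principle (sums of closed convex sets with closed cones can fail to be closed), and moreover when $A$ is infinite the convex hull of $A$ need not itself be closed. The correct reason in this setting is arithmetic: since $A\subseteq S\subseteq cone(S)\cap\Z^{n}$ and $cone(S)$ is a pointed rational polyhedral cone, Gordan's lemma (or a Dickson-type argument, equivalently Noetherianity of the semigroup ring) yields finitely many points $k_{1},\dots,k_{m}\in A$ with $\Gamma_{+}(A)=conv\{k_{1},\dots,k_{m}\}+cone(S)$, which is a genuine polyhedron and in particular closed; this also guarantees that the minimum defining $\ell(v,\Gamma_{+}(A))$ is attained. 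With that substitution your separation argument goes through verbatim, so the gap is in the justification of one hypothesis rather than in the structure of the proof.
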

	
	In the sequence, we define the Newton polyhedron of an ideal $I$ of the ring $\OX{S}$. First, let us introduce the Newton polyhedron of germs in $ \OX{S}$.
	
	\begin{definition}
		Let $g \in \OX{S}$, i.e., $g=f\vert_{X(S)}$, where $f: \C^{r} \to \C$ is an analytic function such that $f=\sum_{k} a_{k}x^{k}$ is its Taylor expansion. The support of $g \in \OX{S}$ is defined by the set $$supp (g) = \big\{k_1b_1+\cdots+k_rb_r \in S: a_k \neq 0 \text{ and }k=(k_1,\dots,k_r)\big\}, $$
		where $b_1,\dots, b_r$ are generators of $S$. The Newton polyhedron of $g$ is defined by $\Poli(g)= \Poli(supp(g))$.
	\end{definition}
	If $v \in \coned{S}$, we define  $\ell(v,g)=\ell(v,\Poli(g))$ and $\Delta(v,g)=\Delta(v,\Poli(g))$.
	
	Consider $G \subseteq \OX{S}$. We denote by $supp(G)$ the union of the supports of the elements of $G$ and define the Newton polyhedron of $G$ as $\Poli(G)=\Poli(supp(G))$. If $I \subseteq \OX{S}$ is an ideal generated by $G$, then $\Poli(I)=\Poli(G)$. That is, if $G = \{g_{1},\dots,g_{p}\}$ is a generating system of $I$, then $\Poli(I)$ is equal to the convex hull of $\Poli(g_{1}) \cup \cdots\cup \Poli(g_{p})$.\\
	
	As in \cite{Carles2004}, we denote by $I^{\circ}$ the ideal of $\OX{S}$ generated by the monomials $x^{k}$ such that
	$k_{1}b_{1}+\cdots+k_{r}b_{r} \in \Poli(I)$, where $x_1,\dots,x_r$ is a coordinate system in $X(S)$.

	\section{The integral closure of ideals in \texorpdfstring{ $\mathcal{O}_{X(S)}$}{OXS}}
	Let $I\subseteq \OX{S}$ be an ideal. An element $h \in \OX{S}$ is said to be integral over $I$ if it satisfies an integral dependence relation 
	$$h^{m}+a_{1}h^{m-1}+ \cdots + a_m=0,\text{ with } a_{i}\in I^{i}.$$
	
	The set of elements $h \in \OX{S}$ which are integral over $I$ forms an ideal of $\OX{S}$, called the integral closure of $I$. We denote the integral closure of $I$ by $\overline{I}$. If $\overline{I} = I$, then $I$ is called integrally closed.

		Let $X$  be a complex analytic set and $I$ an ideal in $\mathcal{O}_{X,x}$. In \cite{Tessier1} the following equivalences are proved.
	
	\begin{proposition}\textup{\cite{Gaffney1992}} \label{prop21}Let $I$ be an ideal of $\mathcal{O}_{X,x}$. The following statements are equivalent:\
		\begin{itemize}
			\item[(i)] $h \in \overline{I}$.
			\item[(ii)] (Growth condition) For each choice of generators $\{g_{i}\}$ of $I$ there exists a neighborhood $U$ of $x$ in $X$ and a constant $\mathcal{C}>0$ such that  $$\vert h(y)\vert \leq  \mathcal{C} \cdot \sup_{i} \{\vert g_{i}(y) \vert\}, \ \forall y \in U.$$
			\item[(iii)](Valuative criterion) For each analytic curve $\psi: (\C,0) \to (X,x)$, 
			$h\circ \psi$ lies in $\psi^{\ast}(I)\mathcal{O}_{1}$, where $\psi^{\ast}(I)\mathcal{O}_{1}$ is the ideal generated by $\psi^{\ast}(I)=\{i \circ \psi : i \in I \}$ in the local ring $\mathcal{O}_1$.
		\end{itemize}
		
	\end{proposition}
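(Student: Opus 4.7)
The plan is to establish the cycle of implications $(i) \Rightarrow (ii) \Rightarrow (iii) \Rightarrow (i)$, which is the classical route to the growth and valuative characterizations of integral closure. For $(i) \Rightarrow (ii)$, I would start from an integral dependence relation $h^m + a_1 h^{m-1} + \cdots + a_m = 0$ with $a_i \in I^i$. Since each $a_i$ is a finite sum of products of $i$ generators from $\{g_j\}$ with holomorphic coefficients, on a sufficiently small neighborhood $U$ of $x$ one obtains $|a_i(y)| \le M_i \bigl(\sup_j |g_j(y)|\bigr)^i$. Where $\sup_j |g_j(y)| \ne 0$, dividing the integral equation by $\bigl(\sup_j |g_j(y)|\bigr)^m$ reduces bounding the ratio $|h(y)|/\sup_j |g_j(y)|$ to the elementary estimate for the modulus of a root of a monic polynomial in terms of its bounded coefficients, which yields the constant $\mathcal{C}$; where $\sup_j |g_j(y)| = 0$, the integral equation forces $h(y)=0$ and the inequality is trivial.

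For $(ii) \Rightarrow (iii)$, I specialize the growth condition to an analytic curve $\psi:(\C,0)\to(X,x)$ to get $|h(\psi(t))| \le \mathcal{C}\sup_j |g_j(\psi(t))|$ for $t$ near $0$. Letting $m_j = \operatorname{ord}_t(g_j \circ \psi)$ and choosing $j_0$ realizing the minimum $m := \min_j m_j$, the inequality forces $\operatorname{ord}_t(h \circ \psi) \ge m$, so in $\mathcal{O}_1 = \C\{t\}$ the series $h \circ \psi$ is divisible by $g_{j_0}\circ \psi$, and therefore $h \circ \psi \in \psi^{\ast}(I)\mathcal{O}_1$.

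For $(iii) \Rightarrow (i)$, the tool is the normalized blow-up $\pi:\widetilde{X}\to X$ of the ideal $I$ at $x$, on which $I\mathcal{O}_{\widetilde{X}}$ becomes an invertible sheaf, locally generated by a single element $u$. I would reduce the claim $h\in\overline{I}$ to showing that $\pi^{\ast}h$ is a section of $I\mathcal{O}_{\widetilde{X}}$, which on the normal space $\widetilde{X}$ can be verified by checking divisibility by $u$ at each discrete valuation centered on the exceptional fiber; each such valuation is realized by an analytic arc on $\widetilde{X}$, whose composition with $\pi$ is an analytic curve in $X$ to which hypothesis $(iii)$ applies. The standard Rees-algebra/integral-closure correspondence then recovers an integral equation for $h$ over $I$. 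The main obstacle lies precisely in this last implication: it requires the machinery of the analytic normalized modification, the equivalence between integrality and principal containment on a normal birational model, and the non-trivial fact that analytic arcs suffice to detect every relevant valuation on $\widetilde{X}$.
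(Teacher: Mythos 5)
The paper does not actually prove this proposition: it is quoted from the literature (Teissier \cite{Tessier1}, Gaffney \cite{Gaffney1992}), so the comparison is with the classical proofs there, and your cycle $(i)\Rightarrow(ii)\Rightarrow(iii)\Rightarrow(i)$ is exactly that classical route. Your arguments for $(i)\Rightarrow(ii)$ (root bound for a monic polynomial with coefficients bounded by $M_i\,(\sup_j|g_j|)^i$, plus the observation that $h$ vanishes where all generators do) and for $(ii)\Rightarrow(iii)$ (comparison of orders along the curve, then divisibility in $\C\{t\}$ by a generator of minimal order) are correct and essentially complete; you should only add the degenerate case in $(ii)\Rightarrow(iii)$ where every $g_j\circ\psi\equiv 0$, in which case the growth condition forces $h\circ\psi\equiv 0$ and the conclusion is trivial. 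The implication $(iii)\Rightarrow(i)$ is, as you say, where all the content sits, and what you give is a plan rather than a proof: it rests on the existence of the normalized blow-up $\pi:\widetilde{X}\to X$ of $I$, on the theorem that $h\in\overline{I}$ if and only if $\pi^{*}h$ lies in the invertible sheaf $I\mathcal{O}_{\widetilde{X}}$, on the fact that the divisorial valuations along the components of the exceptional fiber are computed by generic analytic arcs transverse to those components, and on normality (Riemann extension) to convert the resulting order inequalities into genuine divisibility of $\pi^{*}h$ by the local generator $u$. These are precisely the ingredients of the Lejeune-Jalabert--Teissier/Gaffney proof, so the outline is sound, but each of them is a nontrivial result that your sketch invokes rather than establishes; if this proposition were meant to be proved rather than cited, that last implication would need to be carried out in detail (or reduced explicitly to a reference).
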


	Based in \cite{Tessier1} and considering monomial ideals in $\OX{S}$, we are going to prove that if $h \in \overline{I}$, then all monomials appearing  
	in $h$ are represented in $cone(S)$ by points located in $\Poli(I)$.
	
	\begin{theorem}
		Let $I\subset \mathcal{O}_{X(S)}$ be a monomial ideal. If $h \in \overline{I}$ then $supp(h) \subset \Gamma_+(I)$.\label{22}
	\end{theorem}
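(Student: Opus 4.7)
The plan is to apply the valuative criterion of Proposition~\ref{prop21}(iii) to a family of monomial curves parametrised by $v \in \coned{S}\cap\Z^{n}$. For such $v$, set
\[
\psi_{v}(t) = \bigl(t^{\langle b_{1},v\rangle},\ldots, t^{\langle b_{r},v\rangle}\bigr).
\]
The exponents are nonnegative because $v \in \coned{S}$, and the identity $\pi_{S_{0}}(\alpha_{+}) = \pi_{S_{0}}(\alpha_{-})$ for $\alpha \in \ker(\pi_{S_{0}})$ implies $\psi_{v}^{\ast}(x^{\alpha_{+}} - x^{\alpha_{-}}) = 0$, so $\psi_{v}$ kills every generator of $I_{S}$ from \eqref{nucleo} and defines an analytic map $(\C,0) \to (X(S),0)$. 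A monomial $x^{k}$ corresponding to the lattice point $m = k_{1}b_{1} + \cdots + k_{r}b_{r}$ pulls back to $t^{\langle m, v\rangle}$, so if $I$ is generated by monomials indexed by $m^{(1)},\ldots, m^{(p)} \in S$, then
\[
\psi_{v}^{\ast}(I)\mathcal{O}_{1} = \bigl(t^{\min_{i}\langle m^{(i)},v\rangle}\bigr) = \bigl(t^{\ell(v,\Poli(I))}\bigr).
\]
The valuative criterion therefore yields $\mathrm{ord}_{t}(h \circ \psi_{v}) \geq \ell(v, \Poli(I))$ for every such $v$.

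I would then argue by contradiction. Suppose some $m^{\ast} \in \mathrm{supp}(h)$ is not in $\Poli(I)$. The half-space characterisation of $\Poli(I)$ in the preceding lemma produces $v_{0} \in \coned{S}$ with $\langle m^{\ast}, v_{0}\rangle < \ell(v_{0}, \Poli(I))$. Since both sides depend continuously on $v$ and rational interior points are dense in $\coned{S}$, I perturb $v_{0}$ to a rational point in the interior of $\coned{S}$ preserving the strict inequality and rescale to an integer $v$. Combining terms of $h$ that share a common image in $S$, I write $h = \sum_{m} c_{m}\tilde{x}^{m}$ with $c_{m} \neq 0$ precisely for $m \in \mathrm{supp}(h)$; then $h \circ \psi_{v}(t) = \sum_{m} c_{m} t^{\langle m,v\rangle}$.

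The main obstacle is that several lattice points may achieve the minimum exponent, and cancellations among the corresponding coefficients could push $\mathrm{ord}_{t}(h \circ \psi_{v})$ strictly above $\min_{m}\langle m, v\rangle$, weakening the valuative bound. To get around this I would use that $v$ lies in the interior of $\coned{S}$: the functional $\langle\cdot, v\rangle$ is strictly positive on $cone(S)\setminus\{0\}$ and proper on $cone(S)$, so $\{m \in \mathrm{supp}(h) : \langle m, v\rangle \leq \langle m^{\ast}, v\rangle\}$ is finite. A further generic small rational perturbation of $v$ in the interior of $\coned{S}$, chosen to preserve both the strict inequality $\langle m^{\ast}, v\rangle < \ell(v, \Poli(I))$ and the membership of this finite sublevel set, separates the values $\langle m, v\rangle$ on it. After clearing denominators, the minimum of $\langle \cdot, v\rangle$ on $\mathrm{supp}(h)$ is uniquely attained at some $m^{\ast\ast}$, and $\mathrm{ord}_{t}(h\circ\psi_{v}) = \langle m^{\ast\ast}, v\rangle \leq \langle m^{\ast}, v\rangle < \ell(v, \Poli(I))$, contradicting the valuative criterion. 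Hence $\mathrm{supp}(h) \subset \Poli(I)$.
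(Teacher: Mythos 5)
Your proof is correct in its essentials, but it follows a genuinely different route from the paper. The paper's proof first invokes the fact that the integral closure of a monomial ideal in the semigroup ring is again a monomial ideal (citing Bruns--Gubeladze, Theorem 4.45), so that every monomial $x^{\tilde k}$ occurring in $h$ already lies in $\overline{I}$; it then applies the valuative criterion to each such single monomial along the curve $\varphi(t)=(t^{\langle b_1,\gamma\rangle},\dots,t^{\langle b_r,\gamma\rangle})$ attached to each face normal $\gamma$, comparing orders with a generator supported on that face. You instead apply the valuative criterion directly to $h$ along curves $\psi_v$ with $v$ a rational interior point of $\coned{S}$, and you neutralize the cancellation problem (several points of $supp(h)$ realizing the minimal exponent) by a generic perturbation of $v$ that makes the minimizer of $\langle\cdot,v\rangle$ on $supp(h)$ unique. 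This buys you independence from the external monomiality-of-the-closure theorem, and in fact your argument only uses the inclusion $\psi_v^{*}(I)\mathcal{O}_1\subseteq (t^{\ell(v,\Gamma_{+}(I))})$, so it would even apply to non-monomial ideals; the paper's argument is shorter once the cited theorem is granted and gives the stronger intermediate fact that each monomial of $h$ is integral over $I$. Two points in your write-up deserve to be made explicit: (1) the claim that after a small perturbation the global minimizer over the (infinite) set $supp(h)$ still lies in your finite sublevel set needs the estimate $\langle u,v\rangle\geq\delta\lVert u\rVert$ on $cone(S)$, valid because $v$ is interior and $cone(S)\cap S^{n-1}$ is compact (this also guarantees the minimum is attained and that each exponent value receives only finitely many terms); (2) the preliminary assertion that the valuative bound holds for \emph{every} $v\in\coned{S}\cap\Z^{n}$ is an overstatement, since for boundary $v$ the curve $\psi_v$ need not pass through the origin, but this is harmless because your contradiction only uses interior $v$, for which $\langle b_i,v\rangle>0$ and $\psi_v(0)=0$.
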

	\begin{proof}
		Consider $h \in \overline{I} \subseteq \mathcal{O}_{X(S)}$. Note that $h=f\vert _{X(S)}$, where $f: \C^{r} \to \C$ is an analytic function with Taylor expansion $f= \sum_{k}a_kx^k $, $k\in \Z^{r}_+$.
		Also observe that $I$ is a monomial ideal in the semigroup ring generated by $S \subseteq \mathbb{Z}_+^r$. According to  \cite[Theorem $4.45$]{BrunsGubeladze2009}, $ \overline{I} $ is also a monomial ideal. Consequently, $x^k \in \overline{I}$, for all $x^k$ that appears in $f= \sum_{k}a_kx^k $.

		Let us denote by $d=(d_1,\dots,d_n)$ the element of the form $d=\tilde{k}_1b_1+\cdots+\tilde{k}_rb_r \in supp(h)$     for some $\tilde{k}=(\tilde{k}_1,\dots,\tilde{k}_r) \in \Z^{r}_+$ such that $a_{\tilde{k}}\neq 0$.
		Let $\Delta$ be a face of $\Gamma_+(I)$. Then, there exist $\gamma=(\gamma_1,\dots, \gamma_n) \in \coned{S}$ and $\eta \geq 0$ such that
		$$\Delta=\{c=(c_1,\dots,c_n) \in \Gamma_+(I): \langle c, \gamma\rangle=\eta \}.$$
		
		Let $\varphi: \C \to X(S)$ be an analytic curve defined by $$\varphi(t)=( t^{\langle b_1, \gamma \rangle},\dots, t^{\langle b_r , \gamma \rangle}).$$
		
		Since $X(S)=V(I_S)$ is a toric variety, then $I_S$ is a binomial ideal (see \cite{Cox}). Therefore, $(1,\dots,1)\in X(S)$. Note that, for $t\neq 0$, we have  $(t^{\gamma_1},\dots,t^{\gamma_n}) \in (\mathbb{C}^*)^n$. Considering the toric action defined in (\ref{action}), we obtain $$\varphi(t)= \phi((t^{\gamma_1},\dots,t^{\gamma_n}) ,(1, \dots,1))=( t^{\langle b_1, \gamma \rangle},\dots, t^{\langle b_r, \gamma \rangle}) \in X(S).$$
		
		So, we have
		$$x^{\tilde{k}} \circ \varphi(t)=t^{\langle b_1,\gamma\rangle \tilde{k}_1} \cdots t^{\langle b_r,\gamma\rangle \tilde{k}_r } = t^{N},$$
		where $N=\langle b_1, \gamma \rangle\tilde{k}_1+\cdots +\langle b_r, \gamma \rangle \tilde{k}_r$.
		
		Let $\upsilon: \mathcal{O}_1 \to \Z \cup \{\infty\}$ be the usual valuation on $\mathcal{O}_1$. Thus, we obtain, $$\upsilon(x^{\tilde{k}} \circ \varphi)= \langle b_1, \gamma \rangle\tilde{k}_1+\cdots +\langle b_r, \gamma \rangle \tilde {k}_r = N.$$
		Since $x^{\tilde{k}} \in \overline{I}$, then $x^{\tilde{k}} \circ \varphi \in \varphi^*(I)\mathcal{O}_1$, i.e., $h \circ \varphi = f_1(q_1 \circ \varphi)+\dots+ f_l(q_l \circ \varphi)$, where $q_1,\dots, q_l \in I$ and $f_1,\dotsc,f_l \in \mathcal{O}_1$. We obtain
		\begin{eqnarray}\label{valuation2}
			\upsilon( x^{\tilde{k}} \circ \varphi) \geq \min_{i}\{ \upsilon(f_i(q_i \circ \varphi))\}&=& \upsilon(f_j(q_j \circ \varphi))\nonumber\\
			&\geq& \upsilon(f_j)+ \upsilon(q_j \circ \varphi)\nonumber\\
			&\geq& \upsilon(q_j \circ \psi)\nonumber\\
			&\geq& \upsilon(g \circ \psi),
		\end{eqnarray}
		where $g$ is a generator of $I$. Taking $g=x^w$ such that $w_1b_1+\cdots +w_rb_r \in \Delta$, we have
		$x^w \circ \varphi(t) =t^{\langle b_1, \gamma \rangle w_1+\cdots +\langle b_r, \gamma \rangle w_r }.$ Then $$\upsilon(x^w\circ \varphi) = w_1\langle b_1, \gamma \rangle +\cdots +w_r\langle b_r, \gamma \rangle =\langle w_1b_1+\cdots +w_rb_r, \gamma\rangle=\eta .$$
		
		Moreover, based on the inequality (\ref{valuation2}), we have
		$$\langle d, \gamma \rangle =N = \upsilon(x^{\tilde{k}}\circ \varphi) \geq \upsilon(x^w \circ \varphi)=\eta.$$
		
		Then $d$ is above $\Delta.$ Therefore, $supp(h) \subset \Gamma_+(I)$.
	\end{proof}
	
	Using this result, we can prove the following corollary.
	
	\begin{corollary}\label{corollary23}
		Let $I$ be an ideal of $\OX{S}$. Then we have:
		\begin{itemize}
			\item[(i)]$I^{\circ}$ is integrally closed.
			\item[(ii)] $\Poli(I)=\Poli(\overline{I})$
		\end{itemize}
		\begin{proof}
			$(i)$ Let $h \in \overline{I^{\circ}}$. By Proposition \ref{22}, $supp(h)\subset \Gamma_+(I^{\circ})$. As $\Gamma_+(I^{\circ})=\Gamma_+(I)$, the supports of the monomials that appear in $h= \sum_k a_k x^k$ are in $\Gamma_+(I)$, i.e., $k_1b_1+\cdots+k_rbr \in \Gamma_+(I) $. By definition, these monomials are in $I^{\circ}$. It follows that $h \in I^{\circ}$.
			
			$(ii)$ By definition, $I \subset I^{\circ}$. So $\overline{I}\subset \overline{ I^{\circ}}$. It follows from $(i)$ that $\overline{I} \subset \overline{I^{\circ}} = I^{\circ}$. Thus, we have $I \subset \overline{I} \subset I^{\circ}$ and, then, $\Gamma_+(I) \subset \Gamma_+(\overline{I}) \subset \Gamma_+(I^{\circ} )=\Gamma_+(I).$    \end{proof}
	\end{corollary}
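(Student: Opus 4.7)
The plan is to prove (i) first by a direct application of Theorem~\ref{22}, and then derive (ii) as a short chain of inclusions once (i) is in hand. The guiding observation is that $I^{\circ}$ is, by construction, the largest monomial ideal with the same Newton polyhedron as $I$, so I expect $\Poli(I^{\circ})=\Poli(I)$ and $I\subseteq I^{\circ}$ to do most of the work.

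For part (i), I would take $h\in\overline{I^{\circ}}$ and try to show $h\in I^{\circ}$. Since $I^{\circ}$ is a monomial ideal by definition, Theorem~\ref{22} applies and yields $supp(h)\subseteq\Poli(I^{\circ})$. The only small lemma to check is $\Poli(I^{\circ})=\Poli(I)$: on one hand $I\subseteq I^{\circ}$, so $\Poli(I)\subseteq\Poli(I^{\circ})$; on the other, every generator $x^{k}$ of $I^{\circ}$ satisfies $k_{1}b_{1}+\cdots+k_{r}b_{r}\in\Poli(I)$ by the very definition of $I^{\circ}$, and $\Poli(I^{\circ})$ is the convex hull of these exponents translated by $cone(S)$. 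Once this equality is in place, each monomial appearing in $h=\sum a_{k}x^{k}$ has its exponent in $\Poli(I)$, hence lies in $I^{\circ}$, and summing gives $h\in I^{\circ}$.

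For part (ii), one direction is immediate: $I\subseteq\overline{I}$ forces $\Poli(I)\subseteq\Poli(\overline{I})$. For the reverse, I would use $I\subseteq I^{\circ}$ to pass to integral closures, obtaining $\overline{I}\subseteq\overline{I^{\circ}}$, and then invoke part (i) to replace $\overline{I^{\circ}}$ by $I^{\circ}$. Taking Newton polyhedra and using $\Poli(I^{\circ})=\Poli(I)$ from the previous step closes the loop.

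I do not anticipate a serious obstacle: the real content is packaged inside Theorem~\ref{22}, and the rest is bookkeeping with convex hulls. The only point that requires any care is the identification $\Poli(I^{\circ})=\Poli(I)$, and this follows directly from unwinding the definition of $I^{\circ}$.
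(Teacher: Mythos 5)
Your proposal is correct and follows essentially the same route as the paper: apply Theorem~\ref{22} to the monomial ideal $I^{\circ}$ to get $supp(h)\subset\Gamma_{+}(I^{\circ})=\Gamma_{+}(I)$ and conclude $h\in I^{\circ}$, then obtain (ii) from the chain $I\subset\overline{I}\subset\overline{I^{\circ}}=I^{\circ}$ and the equality of polyhedra. The only difference is that you spell out the identification $\Gamma_{+}(I^{\circ})=\Gamma_{+}(I)$, which the paper asserts without comment, so no gap.
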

	
	In \cite[Prop. 3.8]{bisui2024rationalpowersinvariantideals} the authors also study the integral closure of monomial ideals in $\mathcal{O}_{X(S)}$. We remark that in our work the monomial hypothesis is not necessary to obtain the related results.
	
	According to item $(ii)$ of Corollary \ref{corollary23}, we can affirm that $\overline{I} \subset I^{\circ}$. However, it is not always true that $I^{\circ} \subset \overline{I}$, as we can see in the following example.
	\begin{example}\label{24}
		Let $S$ be the semigroup generated by the set $\{(1,0),(1,1),(1,2) \} \subset \Z^{2}_{+}$. Then, one has that $X(S) = V(I_S) \subset {\mathbb{C}^3}$ is a toric surface, where $I_S$ is the ideal generated by the binomial $xz - y^2$. Consider  $I \subset \mathcal{O}_{X(S)}$ the ideal generated by $f=x^2y+3x^2z $ and $ g =y^3-xy+z^2$. Then, $supp(I)=\{(3,1),(3,2),(3,3),(2,1),(2,4)\},$ and the Newton polyhedron of $I$ is represented in the Figure \ref{Fig2}. Note that $y^2 \in I^{\circ}$, since  $(2,2) \in \Poli(I)$. But $y^2 \notin \overline{I}$, because there is an analytic  curve $\psi: (\C,0) \to (X(S),0)$ defined by $\psi(t)=(t,t,t)$ such that $y^2 \circ \psi\notin\psi^*(I)\mathcal{O}_1$.
		\begin{figure}[H]
			\centering
			\begin{tikzpicture}[scale=0.6]
				% eixos
				\draw[->] (-0.5,0) -- (5,0);
				\draw[->] (0,-0.5) -- (0,5);
				
				\draw[lightgray, thick] (0,0) -- (2.5,5);
				
				\fill[lightgray, opacity=0.2](0,0) -- (2.5,5) -- (5,5) --(5,0)  --cycle;
				
				\fill[black] (0,2) node[left] {$2$};
				\fill[black] (0,3) node[left] {$3$};
				\fill[black] (0,4) node[left] {$4$};
				\fill[black] (0,1) node[left] {$1$};
				\fill[black] (1,0)  node[below] {$1$};
				\fill[black] (2,0)  node[below] {$2$};
				\fill[black] (3,0)  node[below] {$3$};
				\fill[black] (4,0)  node[below] {$4$};

				\fill[black] (2,4) circle (0.4mm) ;
				\fill[black] (3,2) circle (0.4mm) ;
				\fill[black] (3,1) circle (0.4mm) ;
				\fill[black] (3,3) circle (0.4mm) ;
				\fill[black] (2,1) circle (0.4mm) ;
				
				\draw[darkgray, thick] (2,4) -- (2,1);
				\draw[darkgray, thick] (2,4) -- (2.5,5);
				\draw[darkgray, thick] (2,1) -- (5,1);
				\fill[gray, opacity=0.2] (2,4) -- (2.5,5) -- (5,5) -- (5,4)--cycle;
				\fill[gray, opacity=0.2] (3,3) -- (4,5) -- (5,5) -- (5,3)--cycle;
				\fill[gray, opacity=0.2] (3,2) -- (4.5,5) -- (5,5) -- (5,2)--cycle;
				\fill[gray, opacity=0.2] (3,1)  -- (5,5) -- (5,1)--cycle;
				\fill[gray, opacity=0.2] (2,1) -- (4,5) -- (5,5) --(5,1)--cycle;
				\pattern[pattern={hatch[hatch size=7pt, hatch linewidth=.2pt, hatch angle=90]} , pattern color=darkgray] (2,4) -- (2.5,5) --(5,5)--(5,1)--(2,1) --cycle;
				\fill[red] (2,2) circle (0.7mm) ;
			\end{tikzpicture}
			\caption{Newton polyhedron of $I=\langle x^2y+3x^2z,y^3-xy+z^2\rangle$.}
			\label{Fig2}
		\end{figure}
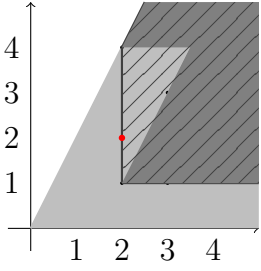
	\end{example}

	\section{Non-degenerate Ideals}
	
	In this section, we present the concept of non-degenerate ideals and establish a relationship between the Newton polyhedron of the ideal $K_{I}$, which is generated by all monomials that belong to the integral closure of the ideal $I$, and the Newton polyhedron of $I$.
	
	Let $g \in \OX{S}$. As we said before, this means that $g=f\vert_{X(S)}$, where $f: \C^{r} \to \C$ is an analytic function, and then can be written as $f=\sum_{k} a_{k}x^{k}$. Now, given a compact subset $A \subseteq cone(S)$, we denote by
	$$g_{A}:= \sum_{k_{1}b_{1}+\cdots+k_{r}b_{r} \in A\cap supp(g)} a_{k} x^{k}.$$
	We set $g_{A} = 0$ whenever $A \cap supp(g) = \emptyset$.
	
	Let $g$ be a representative of the germ $g \in \mathcal{O}_{X(S)}$ such that $0 \notin supp(g)$. Then, we associate to $g$ a polynomial $L(g) \in \On$ given by
	$$L(g)= \sum_{v \in supp(g)} a_v z^v,$$
	where $a_v=a_k$, $a_k\neq 0$ and $v=k_1b_1+\cdots+ k_rb_r.$

	\begin{definition}
		Let $G=\{g_{1},\dots,g_{l}\} \subseteq \OX{S}$. We say that $G$ is non-degenerate if, for each compact face $\Delta$ of $\Poli(G)$, the equations $$L\big((g_1)_{\Delta}\big)= \cdots= L\big((g_l)_{\Delta}\big)=0$$
		have no common solution in $(\C^{\ast})^{n}$.
		
		An ideal $I\subseteq \OX{S}$ is non-degenerate if $I$ admits a non-degenerate system of generators.
		
	\end{definition}
	\begin{remark}
		Upon considering $S=\langle e_{1},\dots,e_{n} \rangle$, the toric variety $X(S)$ corresponds to $\C^{n}$. In this case, we have $cone(S)=\R^{n}_{+}$. Thus, the previous definition can be seen as a generalization of the definition of Newton non-degenerate in $\mathcal{O}_{n}$.
	\end{remark}
	
	In the definition above we ask only for algebraic conditions. This will be enough for our purposes. However, in the literature, we can find other definitions that also require geometrical conditions. For instance, in \cite{MT1} Matsui and Takeuchi define the concept of Newton non-degenerate functions and complete intersection to study Milnor fiber over singular toric varieties. For this purpose, they also used geometrical conditions. 
	
	Yoshinaga proved in \cite{Yoshinaga} that a function $f\in \On$ is Newton non-degenerate if and only if the integral closure of the ideal $$I(f)=\Big\langle x_{1}\frac{\partial f}{\partial x_{1}}, \dots, x_{n}\frac{\partial f}{\partial x_{n}}\Big\rangle$$ is generated by the monomials $x^{k}$ such that $k \in \Poli(f)$. Saia extended this result in  \cite{Marcelo}, proving that an ideal $I \subseteq \On$ is Newton non-degenerate if and only if $\overline{I}$ is generated by the monomials $x^{k}$ such that $k \in \Poli(I).$
	In the following section, we extend this result to the ideals in $\OX{S}$. 
	
	\begin{definition}Let $I$ be an ideal in $\mathcal{O}_{X(S)}$. We define the set $C(\overline{I})$ as the convex hull in $cone(S) $ of the set $$\bigcup \left\{\lambda_1b_1+\cdots+\lambda_r br : x^\lambda \in \overline{I}\right\}, $$
		where $\lambda=(\lambda_1,\cdots,\lambda_r)$.
		\label{c(I)}
	\end{definition}
	
	Observe that $C(\overline{I})$ is the Newton polyhedron of the ideal $K_I$ generated by all monomials belonging to the integral closure of $I$, i.e., $K_I=\langle \{x^m: x^m \in \overline{I}\} \rangle$. This is because $\overline{I}$ is an ideal of $\mathcal{O}_{X(S)}$.
	
	\begin{lemma} \label{lemma34}
		Let  $I=\langle g_{1},\dots,g_{l}\rangle \subseteq \mathcal{O}_{X(S)}$ be the ideal generated by the set $\{g_1, \dots,g_l\} \subset \mathcal{O}_{X(S)}$. Then $C(\overline{I})\subseteq \Poli(I)$.
	\end{lemma}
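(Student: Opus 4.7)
The plan is to reduce the statement to a pointwise claim: for every monomial $x^{\lambda}\in\overline{I}$, the point $d=\lambda_1 b_1+\cdots+\lambda_r b_r$ lies in $\Gamma_{+}(I)$. Since $\Gamma_{+}(I)$ is convex, this forces the convex hull $C(\overline{I})$ to be contained in $\Gamma_{+}(I)$, which is precisely what we want. So the whole proof reduces to handling a single monomial in $\overline{I}$.

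For such a monomial, I would use the half-space characterization recalled just after the definition of Newton polyhedron: it suffices to verify that $\langle d,v\rangle\geq\ell(v,\Gamma_{+}(I))$ for every $v\in\coned{S}$. I would then reproduce the curve trick from the proof of Theorem \ref{22}: for a fixed $v\in\coned{S}$ consider
\[
\varphi(t)=\bigl(t^{\langle b_1,v\rangle},\dots,t^{\langle b_r,v\rangle}\bigr),
\]
which lands in $X(S)$ because it is obtained from the torus action on $(1,\dots,1)\in X(S)$. Composition with $x^{\lambda}$ yields $x^{\lambda}\circ\varphi(t)=t^{\langle d,v\rangle}$, so $\upsilon(x^{\lambda}\circ\varphi)=\langle d,v\rangle$ in the valuation $\upsilon$ of $\mathcal{O}_1$.

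Now I apply the valuative criterion (iii) of Proposition \ref{prop21} to $x^{\lambda}\in\overline{I}$: writing $x^{\lambda}\circ\varphi=\sum f_i\,(g_i\circ\varphi)$ with $g_i\in I$ and $f_i\in\mathcal{O}_1$, the standard valuation estimate gives $\upsilon(x^{\lambda}\circ\varphi)\geq\min_i\upsilon(g_i\circ\varphi)$. For any generator $g$ of $I$ with Taylor expansion $g=\sum_k a_k x^k$, the composition $g\circ\varphi(t)=\sum_k a_k t^{\langle k_1 b_1+\cdots+k_r b_r,\,v\rangle}$ has valuation at least $\min\{\langle m,v\rangle : m\in\mathrm{supp}(g)\}\geq\ell(v,\Gamma_{+}(I))$, because $\mathrm{supp}(g)\subseteq\mathrm{supp}(I)\subseteq\Gamma_{+}(I)$. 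Chaining the two inequalities delivers $\langle d,v\rangle\geq\ell(v,\Gamma_{+}(I))$ for every $v\in\coned{S}$, hence $d\in\Gamma_{+}(I)$.

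The argument is essentially a recycling of the curve-and-valuation mechanism already developed in Theorem \ref{22}, so I do not expect a serious technical obstacle; the main thing to be careful about is that the curve $\varphi$ actually lies in $X(S)$ (guaranteed by the torus action, exactly as in that earlier proof) and that the minimum valuation estimate on each generator $g_i$ really sees the polyhedron $\Gamma_{+}(I)$ and not only $\Gamma_{+}(g_i)$ — but this is immediate from $\Gamma_{+}(g_i)\subseteq\Gamma_{+}(I)$ and hence $\ell(v,\Gamma_{+}(g_i))\geq\ell(v,\Gamma_{+}(I))$. Convexity of $\Gamma_{+}(I)$ closes the argument.
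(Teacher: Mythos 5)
Your proof is correct and takes essentially the same approach as the paper: the paper argues by contraposition (if $d\notin\Gamma_{+}(I)$ it picks $v$ with $\langle d,v\rangle<\ell(v,\Gamma_{+}(I))$ and uses the same monomial curve $\varphi$ to conclude $x^{\lambda}\circ\varphi\notin\varphi^{*}(I)\mathcal{O}_1$, hence $x^{\lambda}\notin\overline{I}$), while you run the identical curve-and-valuation computation in the direct direction via the half-space description of $\Gamma_{+}(I)$. Both versions rest on the same implicit reduction to (primitive) integer vectors $v\in\coned{S}$ so that $\varphi$ is a genuine analytic curve, so there is nothing substantive to change.
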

	\begin{proof}
		If $d=\lambda_1 b_1+\cdots+\lambda_rb_r \notin \Poli(I)$, there exists some $v \in \coned{S}$ such that $$\langle d,v\rangle < \ell(v,\Poli (I)).$$ 
		By the definition of $\ell(v,\Poli(I))$, we have
		$$\ell(v,\Poli(I)) \leq \langle\beta_{j1}b_1+ \cdots+ \beta_{jr}b_r,v\rangle,$$
		where $\beta_{j1}b_1+ \cdots+ \beta_{jr}b_r\in supp(g)\subset \Poli(I)$, for all $g \in I$.
		Then \begin{equation}
			\lambda_1\langle b_1,v\rangle+\cdots+ \lambda_r\langle b_r,v\rangle=\langle d,v \rangle < \beta_{j1}\langle b_1,v\rangle+\cdots+ \beta_{jr}\langle b_r,v\rangle. \label{5}
		\end{equation}
		
		Consider $\varphi: \C \to X(S)$ defined by $$\varphi(t)=(t^{\langle b_1,v\rangle}, \dots, t^{\langle b_r,v \rangle}).$$
		
		Since $X(S)=V(I_S)$ is a toric variety, $I_S$ is a binomial ideal. So, $(1,\dots,1)\in X(S)$. Note that $(t^{v_1},\dots,t^{v_n}) \in (\mathbb{C}^*)^n$, for $t\neq 0$. Thus, considering the toric action defined in (\ref{action}), we obtain $$\varphi(t)= \phi((t^{v_1},\dots,t^{v_n}) ,(1,\dots ,1))=( t^{\langle b_1, v\rangle},\dots, t^{\langle b_r, v\rangle}) \in X(S).$$
		
		Furthermore, $\varphi(0)=(0,\dots, 0) \in X(S) $, since $\dim(cone(S))= n$ and $cone(S)$ is strongly convex.
		We have
		$$x^\lambda \circ \varphi= t^{\lambda_1\langle b_1,v\rangle+\cdots+ \lambda_r\langle b_r,v\rangle},$$
		and for all $g \in I$, we can write
		$$ g = \sum_{\beta_{j1}b_1+ \cdots+ \beta_{jr}b_r\in supp(g)}a_j x^{\beta_j}.$$
		
		Thus,
		$$ g \circ \varphi= \sum_{\beta_{j1}b_1+ \cdots+ \beta_{jr}b_r\in supp(g)}a_j t^{\beta_{j1}\langle b_1,v\rangle+\cdots+ \beta_{jr}\langle b_r,v\rangle}.$$
		
		By inequality (\ref{5}), $x^\lambda \circ \varphi \notin \varphi^*(I)\mathcal{O}_1$. Therefore, $x^\lambda \notin \overline{I}$.
		Consequently
		$$d \notin\bigcup \left\{\lambda_1b_1+\cdots+\lambda_r br : x^\lambda \in \overline{I}\right\}. $$
		
		We proved that $\bigcup \left\{\lambda_1b_1+\cdots+\lambda_r br : x^\lambda \in \overline{I}\right\} \subset \Poli(I)$. Hence, $C(\overline{I})\subset \Poli(I)$.
	\end{proof}
	
	\section{Toroidal Embedding}
	
	Our main goal is to prove that an ideal $I \subseteq \OX{S}$ is non-degenerate if and only if $C(\overline{I})=\Poli(I)$. To do this, we will consider the construction of the toroidal embedding associated with the Newton polyhedron $\Poli(I)$.
	
	In \cite{Marcelo}, Saia generalized Yoshinaga's result \cite{Yoshinaga} using the construction of a toroidal embedding associated with the Newton polyhedron of a finite codimensional ideal $I$ in $\On$ to show that $I$ is Newton non-degenerate if and only if the Newton polyhedron of $I$ is the convex hull of the set of $n$-tuples $m=(m_1,\dots,m_n)$ such that $x^m \in \overline{I}$. The procedure for constructing the toroidal embeddings associated with a given Newton polyhedron is a local modification of Khovanskii's method of assigning a compact complex non-singular toroidal manifold to an integer-valued compact convex polyhedron in $\C^n$. Here, we apply this procedure to the Newton polyhedron in the $cone(S)$. This construction is the essential tool used to prove our main result. 
	
	For any ideal $I \subseteq \mathcal{O}_{X(S)}$ we describe a partition of $\coned{S}$ into convex cones with respect to $\Gamma_{+}(I)$.
	
	\begin{definition}
		The vector $a=(a_1,\dots,a_n) \in \coned{S}$ is called a primitive integer vector if $a$ is the vector with minimum length in $C(a)\cap (\Z^{n}-\{0\})$, where $C(a)$ is the half ray emanating from $0$ passing  thorough $a$.
	\end{definition}
	
	We define an equivalence relation on $\coned{S}$ by $$a \sim a' \text{ if only if }\Delta(a)=\Delta(a').$$

	Considering this equivalence, any equivalence class is naturally identified with a convex cone with its vertex at zero, specified by finitely many linear equations and strictly linear inequalities with rational coefficients.
	
	The closures of equivalence classes specify a partition $\Sigma_{0}$ of $\coned{S}$ into closed convex cones that have the properties:
	\begin{itemize}
		\item If $\sigma_1$ is a face of a cone $\sigma$ in $\Sigma_0$, then $\sigma_{1} \in \Sigma_{0}$.
		\item  For any cones $\sigma_1$ and $\sigma_2$ in $\Sigma_{0}$, $\sigma_{1} \cap \sigma_{2}$ is a face of both $\sigma_{1}$ and $\sigma_{2}$.
	\end{itemize}
	
	In this way, based on $\Sigma_{0}$ and using the algorithm described in the proof of Theorem $11$ from \cite{Kempf1973}, we construct a partition $\Sigma$ of $\coned{S}$ into finitely many closed convex cones with their vertices at zero such that: 
	\begin{enumerate}
		\item[1.] Any cone belonging to $\Sigma$ lies in one of the cones in $\Sigma_{0}$ and is specified by finitely many linear equations and linear inequalities with rational coefficients.
		\item[2.] If $\sigma_1$ is a face of a cone $\sigma$ in $\Sigma$, then $\sigma_{1} \in \Sigma$.
		\item[3.]  For any cones $\sigma_1$ and $\sigma_2$ in $\Sigma$, $\sigma_{1} \cap \sigma_{2}$ is a face of both $\sigma_{1}$ and $\sigma_{2}$.
		
		\item[4.] Any cone $q$-dimensional $\sigma$ in $\Sigma$ is simplicial and unimodular, i.e., there exists a set of primitive integer vectors $a^{1}(\sigma),\dots,a^{q}(\sigma)$ which are linearly independent over $\R$ and $n-q$ primitive integer vectors $a^{q+1}(\sigma),\dots,a^{n}(\sigma)$ such that $$\Z a^{1}(\sigma)+ \cdots + \Z a^{n}(\sigma)=\Z^{n}.$$
	\end{enumerate}
	Based on these properties, we can make the following observations:
	
	For a $1$-dimensional cone $\sigma$ in $\Sigma$, where $a^{i}(\sigma)$ represents the corresponding primitive integer of $\sigma$, the set $\Delta(a^{i}(\sigma))$ is a closed face of dimension $(n-1)$ of $\Poli(I)$.
	
	For each compact face $\Delta $ of $\Poli(I)$, there exists a cone $\sigma \in \Sigma$ of dimension $n$ and a subset $J$ of $\{1,\dots,n\}$ such that $\Delta$ can be expressed as the intersection  $\bigcap_{j \in J} \Delta(a^{n}(\sigma))$, where  $a^{1}(\sigma),\dots,a^{n}(\sigma)$ is the corresponding set of primitive integer vectors of $\sigma$. We shall denote this face $\Delta$ by $\Delta_J$. Notice that $\Delta_{J}=\Delta\big(\sum_{i\in J}a^{i}(\sigma)\big)$.
	
	For all $n$-dimensional cones $\sigma \in \Sigma$, $\bigcap^{n}_{i=1} \Delta(a^{i}(\sigma))$ is a one-point set.\\

	Let $\sigma$ be a $n$-dimensional cone in $\Sigma$ and $a^{1}(\sigma),\dots,a^{n}(\sigma)$ the corresponding set of primitive integer vectors of $\sigma$ that has been ordered. We associate to each such $\sigma$ a copy of $\C^{n}$ denoted by $\C^{n}(\sigma)$. Let us  denote by $\pi_{\sigma}: \C^{n}(\sigma)\to X(S)$ the mapping given by 
	$$\pi_{\sigma}(y_1,\dots,y_n)=\left(y_1^{\langle a^1(\sigma),b_1 \rangle}  \cdots  y_n^{\langle a^n(\sigma),b_1 \rangle}, \dots,y_1^{\langle a^1(\sigma),b_r \rangle}  \cdots  y_n^{\langle a^n(\sigma),b_r\rangle}\right),$$ 
	where $y_1,y_2,\dots,y_n$ are the coordinates in $\C^n(\sigma)$ and $a^{i}(\sigma)=(a^{i}_{1}(\sigma),\dots,a^{i}_{n}(\sigma))$.\\
	
	Note that $\pi_{\sigma}$ is the composition $\tilde{\phi} \circ \Omega_{\sigma}$ of the following maps 
	\begin{displaymath}
		\begin{matrix}\Omega_{\sigma}: &\C^{n}(\sigma)&\longrightarrow& (\C^{\ast})^{n}\\ 
			& (y_1,\dots,y_n)&\longmapsto& \left(y_1^{ a^{1}_{1}(\sigma)}\cdots  y_n^{ a^{n}_{1}(\sigma)}, \dots,y_1^{a^{1}_{n}(\sigma)}  \cdots y_n^{a^{n}_{n}(\sigma)}\right)\end{matrix}
	\end{displaymath}
	and 
	\begin{displaymath}
		\begin{matrix}\tilde{\phi}: &(\C^{\ast})^{n}&\longrightarrow& X(S)\\ 
			& z&\longmapsto& (z^{b_1},\dots,z^{b_{r}}).\end{matrix}
	\end{displaymath}
	where $b_1,\dots,b_r$ are the generators of $S$. 
	The map $\tilde{\phi}$ is a restriction of the torus action given in (\ref{action}) to $(\mathbb{C}^*)^n$ and it is a diffeomorphism from $(\C^\ast)^n$ to the open dense orbit $\mathcal{O}$ of $X(S)$. In this context, $\pi_\sigma$ maps the $(\C^\ast)^n$ obtained from $\C^n(\sigma)$ onto the $(\C^\ast)^n$ corresponding to $\mathcal{O}$. Consequently, for every point $w \in \mathcal{O}$, there exists $y_\sigma \in \C^n(\sigma)$ such that $w = \pi_\sigma(y_\sigma)$.\\

	Consider the disjoint union $ \displaystyle\mathcal{C}= \bigcup_{\sigma \in \Sigma} \C^{n}(\sigma)$. We define the following equivalence relation:
	given two points $y_{\sigma} \in \C^{n}(\sigma)$ and $y_{\tau} \in \C^{n}(\tau)$, $$y_{\sigma}  \sim y_{\tau} \text{ if and only if } \pi_\sigma(y_\sigma)=\pi_\tau(y_\tau).$$
	We will denote this set thus obtained by $X=X(\Gamma_{+}(I))= 
	\mathcal{C}/\sim$.\\

	Since $\Sigma$ satisfies properties $1-4$, by Theorems $6$, $7$ and $8$ of \cite{Kempf1973}, we have that $X$ is a non-singular $n$-dimensional algebraic complex manifold and $\pi: X \to X(S)$ defined by $\pi(y)=\pi_{\sigma}(y_\sigma)$ is a proper analytic mapping onto $X(S)$, where $y_{\sigma} \in \C^{n}(\sigma)$ is a representative of the equivalence class $y \in X$.\\
	
	Let $J$ be a subset of $\{1,\dots,n\}$. Define 
	$$E_{\sigma,J}=\{y_{\sigma} \in \C^n(\sigma): y_{\sigma,j}=0, \forall j \in J\}\text{ and }E_{\sigma,J}^*=\{y_{\sigma} \in E_{\sigma,J}: y_{\sigma,j}\neq 0, \forall j \notin J\},$$
	where $y_\sigma =(y_{\sigma,1}, \dots, y_{\sigma,n})$.
	
	Based on \cite[Lemma 3.1]{Fukui1985}, we have the following lemma.
	
	\begin{lemma}\label{lemma42}
		$\pi_{\sigma}(E_{\sigma,J})=\{0\}$ if only if $\Delta_J= \bigcap_{i \in J} \Delta(a^i(\sigma))$ is a compact face of  $\Gamma_{+}(I)$.
	\end{lemma}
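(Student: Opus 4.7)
The plan is to translate both sides of the asserted equivalence into one and the same combinatorial condition on the pairings $\langle a^{j}(\sigma),b_{k}\rangle$, and then verify the two translations independently.

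First I would analyze the image $\pi_{\sigma}(E_{\sigma,J})$. A generic point of $E_{\sigma,J}$ lies in $E_{\sigma,J}^{*}$, where $y_{\sigma,j}=0$ precisely for $j\in J$ and $y_{\sigma,i}\neq 0$ for $i\notin J$. For such a point, the $k$-th coordinate of $\pi_{\sigma}(y_{\sigma})$ equals $\prod_{i=1}^{n} y_{\sigma,i}^{\langle a^{i}(\sigma),b_{k}\rangle}$. Since $a^{i}(\sigma)\in \coned{S}$ and $b_{k}\in cone(S)$, every exponent is non-negative, so this coordinate vanishes if and only if there exists $j\in J$ contributing a strictly positive exponent, i.e.\ $\langle a^{j}(\sigma),b_{k}\rangle>0$; otherwise its generic value is nonzero. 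Because $E_{\sigma,J}^{*}$ is open and dense in $E_{\sigma,J}$ and $\pi_{\sigma}$ is continuous, I would conclude
$$\pi_{\sigma}(E_{\sigma,J})=\{0\}\iff\forall k\in\{1,\dots,r\},\ \exists j\in J\ \text{with}\ \langle a^{j}(\sigma),b_{k}\rangle>0. \qquad(\star)$$

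Next I would analyze the compactness of $\Delta_{J}$. Using the identity $\Delta_{J}=\Delta(v)$ for $v:=\sum_{j\in J}a^{j}(\sigma)$ noted in the excerpt, a face of $\Gamma_{+}(I)$ is compact if and only if its recession cone is trivial. The recession cone of $\Gamma_{+}(I)$ is $cone(S)$, so the recession cone of $\Delta(v)$ equals $\{w\in cone(S):\langle w,v\rangle=0\}$. Since $cone(S)$ is positively generated by $b_{1},\dots,b_{r}$ and each $\langle b_{k},v\rangle\geq 0$, this set reduces to $\{0\}$ if and only if $\langle b_{k},v\rangle>0$ for every $k$. Expanding $\langle b_{k},v\rangle=\sum_{j\in J}\langle a^{j}(\sigma),b_{k}\rangle$ as a sum of non-negative terms, positivity is equivalent to the existence of some $j\in J$ with $\langle a^{j}(\sigma),b_{k}\rangle>0$, which is precisely the right-hand side of $(\star)$. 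Combining the two steps finishes the proof.

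The step I expect to be the main obstacle is the passage from compactness of $\Delta_{J}$ to the recession cone description. It relies on strong convexity of $cone(S)$ and the identification of the recession cone of $\Gamma_{+}(I)$ with $cone(S)$ itself, both of which are standard polyhedral facts but deserve careful justification in this toric setting. Once these are in place, both sides of the equivalence collapse to the same combinatorial condition and the lemma follows.
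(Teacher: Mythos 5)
Your proof is correct and takes essentially the same route as the paper: both arguments reduce $\pi_{\sigma}(E_{\sigma,J})=\{0\}$ and the compactness of $\Delta_J$ to one and the same condition, namely $\big\langle \sum_{j\in J}a^{j}(\sigma),b_l\big\rangle>0$ for all $l\in\{1,\dots,r\}$. Your recession-cone computation merely makes precise the paper's brief remark that vanishing of some pairing means $\Delta_J$ is parallel to a face of $cone(S)$ and hence non-compact.
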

	\begin{proof}
		By definition of $\pi_\sigma$, we have $$\pi_\sigma(E_{\sigma,J})= \{0\} \text{ if and only if }  \displaystyle  \big\langle \sum_{i \in J}a^{i}(\sigma),b_l\big\rangle>0, \text{ for all } l \in \{1,2,\dots,r\}.$$
		
		Note that  $ \big\langle \sum_{i \in J}a^{i}(\sigma),b_l\big\rangle = 0$ for some $l \in \{1,\dots,r\}$ if and  only if $\Delta_J$ is a face of  $\Poli(I)$ parallel to some face of $cone(S)$, i.e., $\Delta_J$ is not compact.
		
		Therefore, $ \big\langle \sum_{i \in J}a^{i}(\sigma),b_l\big\rangle>0, \text{ for all } l \in \{1,2,\dots,r\}$ if, and only if, $\Delta_J$ is a compact face.
	\end{proof}
	
	Consequently, $y_{\sigma} \in \pi_{\sigma}^{-1}(0)$ if and only if there exists a compact face $\Delta_J$ such that $y_{\sigma} \in E_{\sigma,J}$.
	
	For each $n$-dimensional cone $\sigma \in \Sigma$ and any generator $g_i$ of $I$, let $h_i(y_{\sigma})$ be the analytic germ of function given by
	$$ h_{i}(y_{\sigma})= \sum_{v\in supp(g_i)}a_v\cdot y_{\sigma,1}^{\langle a^1(\sigma),v\rangle-\ell(a^1(\sigma))}\dots y_{\sigma,n}^{\langle a^n(\sigma),v\rangle-\ell(a^n(\sigma))},$$
	where  $g_i= \sum_{k}a_k x^k$, $a_v=a_k$ and $v=k_1b_1+\cdots+k_rb_r$.  The following equality is satisfied by $h_i(y_{\sigma})$: 
	\begin{equation}\label{eq4}
		g_i  \circ \pi_{\sigma}(y_{\sigma})=y_{\sigma,1}^{\ell(a^{1}(\sigma))} \cdots y_{\sigma,n}^{\ell(a^{n}(\sigma))} \cdot h_i(y_{\sigma}).
	\end{equation}
	
	Furthermore, if $y_{\sigma} \in E_{\sigma,J}$ then 
	\begin{equation}\label{eq5}
		(g_i)_{\Delta_J}  \circ \pi_{\sigma}(y_{\sigma})=y_{\sigma,1}^{\ell(a^{1}(\sigma))} \cdots y_{\sigma,n}^{\ell(a^{n}(\sigma))} \cdot h_i(y_{\sigma}).
	\end{equation}
	
	Therefore, the next proposition gives us a necessary and sufficient condition so that a monomial $ x^m$ belongs to the integral closure of the ideal $I\subseteq \OX{S}$.
	\begin{proposition}
		Let $I$ be an ideal of $\OX{S}$ and $x^m \in \OX{S}$. Then $x^m \in \overline{I}$ if and only if
		$$\vert x^m\vert \circ \pi_{\sigma}(y_{\sigma}) \leq \mathcal{C} \cdot \displaystyle\sup_{i} \{\vert g_i\vert  \circ \pi_{\sigma}(y_\sigma)\},$$
		for all $n$-dimensional cone $\sigma \in \Sigma$ and all $y_{\sigma} \in \pi_{\sigma}^{-1}(U)$, where $U$  is a neighborhood of $0$ in $X(S)$.\end{proposition}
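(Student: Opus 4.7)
The proposition is a toroidal-chart reformulation of the growth condition (Proposition \ref{prop21} (ii)): both directions are obtained by transporting that condition across the proper modification $\pi : X \to X(S)$ built in this section, using that the charts $\pi_\sigma$ indexed by the $n$-dimensional cones $\sigma \in \Sigma$ cover $X$.

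For the direction $(\Rightarrow)$, assume $x^m \in \overline{I}$. Proposition \ref{prop21} (ii) applied to the generating set $\{g_1,\dots,g_l\}$ of $I$ yields a neighborhood $U$ of $0$ in $X(S)$ and a constant $\mathcal{C} > 0$ such that $|x^m(z)| \leq \mathcal{C}\cdot\sup_i |g_i(z)|$ for every $z \in U$. Now fix any $n$-dimensional $\sigma \in \Sigma$ and any $y_\sigma \in \pi_\sigma^{-1}(U)$; by definition $\pi_\sigma(y_\sigma) \in U$, so evaluating the above inequality at $z = \pi_\sigma(y_\sigma)$ gives the desired bound on $\pi_\sigma^{-1}(U)$.

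For the direction $(\Leftarrow)$, I would like to reverse the previous argument: given the pointwise bound on each $\pi_\sigma^{-1}(U)$, I want to recover the growth condition on $U$ itself and then invoke Proposition \ref{prop21} (ii). For this I need that every point $z \in U$ is of the form $\pi_\sigma(y_\sigma)$ for some $n$-dimensional $\sigma \in \Sigma$ and some $y_\sigma \in \C^n(\sigma)$. Since $\Sigma$ partitions the $n$-dimensional cone $\coned{S}$, each lower-dimensional $\tau \in \Sigma$ is a face of some $n$-dimensional $\sigma \in \Sigma$, so by the gluing prescription from Theorems~$6$--$8$ of \cite{Kempf1973} the affine charts $\C^n(\sigma)$ with $\dim\sigma = n$ already cover $X$. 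The map $\pi : X \to X(S)$ being proper has closed image, and this image contains the open dense torus orbit $\mathcal{O}$ (via the diffeomorphism $\tilde{\phi}\circ\Omega_\sigma$), hence $\pi(X) = X(S)$. Combining these two facts gives $X(S) = \bigcup_{\dim\sigma = n} \pi_\sigma(\C^n(\sigma))$. Then for any $z \in U$ we pick $\sigma$ and $y_\sigma$ with $\pi_\sigma(y_\sigma) = z$, so $y_\sigma \in \pi_\sigma^{-1}(U)$ and the hypothesis yields $|x^m(z)| \leq \mathcal{C}\cdot\sup_i |g_i(z)|$. Proposition \ref{prop21} (ii) then concludes $x^m \in \overline{I}$.

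The main subtlety I expect is not the string of inequalities, which are formal, but rather justifying that the charts $\pi_\sigma$ of top-dimensional cones really cover $X(S)$ near $0$ (so that the bound on every chart translates into a single growth inequality on a single neighborhood $U \subset X(S)$). This is where the construction of $\Sigma$ via Kempf's algorithm and the properness of $\pi$ are used; once these are invoked, the equivalence is immediate.
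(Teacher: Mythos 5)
Your forward direction is exactly the paper's: restrict the growth inequality of Proposition \ref{prop21}(ii) along $\pi_\sigma$. Your converse, however, takes a genuinely different route. You argue that $\pi$ is onto $X(S)$ (closed image by properness plus density of the orbit $\mathcal{O}$ --- a fact the paper in any case asserts via Theorems 6--8 of Kempf), and that $X$ is by construction covered by the charts $\C^{n}(\sigma)$ of the $n$-dimensional cones; hence every $z\in U$ is $\pi_\sigma(y_\sigma)$ for some $y_\sigma\in\pi_\sigma^{-1}(U)$, so the chart inequalities descend to $|x^m(z)|\leq \mathcal{C}\sup_i|g_i(z)|$ on all of $U$, and Proposition \ref{prop21}(ii) gives $x^m\in\overline{I}$. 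The paper instead works only over the open orbit: it uses that each single chart $\pi_\sigma$ already surjects onto $\mathcal{O}$ (because $\Omega_\sigma$ is invertible on the torus), deduces the inequality along any analytic curve $\varphi$ with values in $\mathcal{O}$, concludes $x^m\circ\varphi\in\overline{\varphi^{*}(I)\mathcal{O}_1}$, and then invokes Lemma 3.3 of \cite{Gaffney1999} to pass from curves meeting the dense orbit to membership in $\overline{I}$. Your argument is more direct: it avoids the external valuative lemma and the slightly delicate point that curves through $0$ cannot lie entirely in $\mathcal{O}$; its cost is that it leans on the full surjectivity/properness of $\pi$, whereas the paper only needs the elementary torus-chart fact plus Gaffney's lemma. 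Two minor points you should make explicit: (1) Proposition \ref{prop21}(ii) is phrased ``for each choice of generators'', while you verify the bound only for the fixed generating set $\{g_1,\dots,g_l\}$; this suffices because a bound for one finite generating set implies one for any other generating set with a different constant, but say so; (2) the single pair $(U,\mathcal{C})$ in the hypothesis is what lets you get one uniform growth inequality on $U$ --- if the constant or neighborhood were allowed to depend on $\sigma$, you would first intersect over the finitely many $n$-dimensional cones of $\Sigma$.
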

	\begin{proof}
		If $x^m \in \overline{I}$, then
		$$\vert x^m\vert  \leq \mathcal{C} \cdot \displaystyle\sup_{i} \{\vert g_i(x)\vert \},$$
		for all $x$ in a neighborhood $U$ of $0$.
		Thus, for all $y_{\sigma} \in \pi_{\sigma}^{-1}(U)$, we have
		$$\vert x^m  \circ \pi_\sigma ( y_{\sigma})\vert  \leq \mathcal{C}\cdot\displaystyle\sup_{i} \{\vert g_i  \circ \pi_\sigma(y_\sigma)\vert \}.$$
		Conversely, consider $\varphi: \C \to \mathcal{O}$ an analytic curve, where $\mathcal{O}$ is the dense open set in $X(S)$ obtained from the torus action (\ref{action}).
		If $t\in U_0$, where $U_0$ is a neighborhood of $0$ in $\C$, then $\varphi(t)\in \mathcal{O}$. Due to the definition of $\pi_\sigma$, there exists $y_\sigma \in \C^n(\sigma)$ such that $\varphi(t) = \pi_\sigma(y_\sigma)$.
		It follows that $y_\sigma \in  \pi_\sigma^{-1}(\mathcal{O})$.
		In this way, we have 
		$$ \vert x^m \circ \varphi(t)\vert   \leq \mathcal{C} \cdot \displaystyle\sup_ {i} \{\vert g_i \circ \varphi(t)\vert \}, $$
		for all $t \in U_0$.
		Therefore, $x^m \circ \varphi \in \overline{\varphi^{*}(I)}$, for every curve $\varphi$ in $\mathcal{O}$.
		We conclude, by Lemma $(3.3)$\footnote{We are applying Lemma (3.3) in the particular case where $M = I$ is an ideal.} in \cite{Gaffney1999}, that $x^m \in \overline{I}$.
	\end{proof}
	\begin{lemma}\label{lemma45}
		An ideal $I$ is non-degenerate if and only if $\sup_{i} \vert h_i(y_{\sigma})\vert \neq 0$ for all $n$-dimensional cone $\sigma$ of $\Sigma$ and all $y_\sigma \in \pi_{\sigma}^{-1}(0)$.
	\end{lemma}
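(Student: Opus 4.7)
My plan is to prove both directions by contraposition, after establishing a single computational identity that relates $h_i$ on the stratum $E_{\sigma,J}^*$ to the face polynomial $L((g_i)_{\Delta_J})$ on the torus $(\C^*)^n$. Given $y_\sigma\in E_{\sigma,J}^*$ (so $y_{\sigma,j}=0$ exactly for $j\in J$), let $\tilde y_\sigma\in(\C^*)^n$ be the point whose coordinates agree with $y_\sigma$ outside $J$ and equal $1$ inside $J$. Using $\Omega_\sigma(\tilde y_\sigma)^v=\prod_{k}\tilde y_{\sigma,k}^{\langle a^k(\sigma),v\rangle}$ together with the face equations $\langle a^j(\sigma),v\rangle=\ell(a^j(\sigma))$ for $v\in\Delta_J$ and $j\in J$, a direct expansion yields the key identity
$$
L\bigl((g_i)_{\Delta_J}\bigr)\bigl(\Omega_\sigma(\tilde y_\sigma)\bigr)=\Bigl(\prod_{j\notin J}y_{\sigma,j}^{\ell(a^j(\sigma))}\Bigr)\,h_i(y_\sigma),
$$
whose prefactor lies in $\C^*$. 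The unimodularity condition 4 in the construction of $\Sigma$ says that $a^1(\sigma),\dots,a^n(\sigma)$ form a $\Z$-basis of $\Z^n$, so $\Omega_\sigma$ restricts to a group isomorphism on $(\C^*)^n$.

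For the direction $(\Leftarrow)$, I argue by contraposition. If $I$ is not non-degenerate, some compact face $\Delta$ of $\Poli(I)$ admits a common zero $z^*\in(\C^*)^n$ of the polynomials $L((g_i)_\Delta)$. By the compact-face description preceding Lemma \ref{lemma42}, I may write $\Delta=\Delta_J$ for some $n$-dimensional cone $\sigma\in\Sigma$ and some $J\subseteq\{1,\dots,n\}$; bijectivity of $\Omega_\sigma$ yields $\tilde y_\sigma\in(\C^*)^n$ with $\Omega_\sigma(\tilde y_\sigma)=z^*$. Setting the coordinates indexed by $J$ to zero produces $y_\sigma\in E_{\sigma,J}^*$, which lies in $\pi_\sigma^{-1}(0)$ by Lemma \ref{lemma42}, and the key identity forces $h_i(y_\sigma)=0$ for every $i$.

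For the direction $(\Rightarrow)$, again by contraposition, assume there exist $\sigma$ and $y_\sigma\in\pi_\sigma^{-1}(0)$ with $h_i(y_\sigma)=0$ for all $i$. Let $J=\{j:y_{\sigma,j}=0\}$, so $y_\sigma\in E_{\sigma,J}^*$. The remark following Lemma \ref{lemma42} supplies some $J'\subseteq J$ with $\Delta_{J'}$ compact; since each $a^i(\sigma)$ pairs non-negatively with every $b_l$, the decomposition $\sum_{i\in J}a^i(\sigma)=\sum_{i\in J'}a^i(\sigma)+\sum_{i\in J\setminus J'}a^i(\sigma)$ combined with Lemma \ref{lemma42} shows $\Delta_J$ is compact as well. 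The key identity then converts the vanishing hypothesis into $L((g_i)_{\Delta_J})(\Omega_\sigma(\tilde y_\sigma))=0$ for every $i$, producing a common zero on $(\C^*)^n$ and contradicting non-degeneracy of $I$. I expect the main obstacle to be the careful bookkeeping required to derive the key identity, particularly tracking how the $\ell(a^j(\sigma))$ shifts interact with the $\Z$-basis exponents and with the index set $J$; once that is in place, the rest of the argument reduces to matching a compact face with a pair $(\sigma,J)$ and invoking Lemma \ref{lemma42}.
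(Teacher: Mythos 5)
Your proof is correct and takes essentially the same route as the paper: compact faces are matched with pairs $(\sigma,J)$ via Lemma \ref{lemma42} and its surrounding remarks, and $h_i$ on $E^*_{\sigma,J}$ is compared with $L\big((g_i)_{\Delta_J}\big)$ through the torus chart $\Omega_\sigma$ up to a nonvanishing monomial factor, which is exactly the identity the paper uses in (\ref{eq5}) and in its proof of the lemma. The only cosmetic adjustment is that in the $(\Leftarrow)$ direction you should invoke the key identity with the actual $J$-coordinates of $\tilde y_\sigma=\Omega_\sigma^{-1}(z^*)$ (the prefactor then being $\prod_{k=1}^{n}\tilde y_{\sigma,k}^{\ell(a^k(\sigma))}$, still nonzero, since for $v\in\Delta_J$ the $J$-coordinates contribute exactly $\tilde y_{\sigma,j}^{\ell(a^j(\sigma))}$), rather than the version normalized with $1$'s, so that it applies directly at $z^*$; your own expansion already yields this more general form.
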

	\begin{proof}
		Let $y_{\sigma} \in \pi^{-1}_{\sigma}(0)$. Then we choose $J$ as large as possible such that $y_{\sigma}\in E^{*}_ {\sigma,J}$. It follows from Lemma \ref{lemma42} that $\Delta_J= \bigcap_{i\in J} \Delta(a^i(\sigma))$ is a compact face of $\Poli(I)$. Since $I$ is non-degenerate, there is no common solution in $(\C^*)^n$ for the equations
		$$L\big(g_{1_{\Delta_J}}\big)= \cdots= L\big(g_{s_{\Delta_J}}\big)=0,$$
		where $g_1,\dots,g_s$ are generators of $I$.
		Consider $\tilde{\phi}: \C^{n} \to X(S)$ defined by $\tilde{\phi}(z)=(z^{b_1},\dots,z^{b_r}),$ where $b_1,\dots,b_r$ are generators of $S$. We have 
		$$ L(g_{i_{\Delta_J}}) \circ  \tilde{\phi}^{-1}\circ \pi_{\sigma}(y_{\sigma})=y_{\sigma,1}^{\ell(a^{1}(\sigma))} \cdots y_{\sigma,n}^{\ell(a^{n}(\sigma))} \cdot h_i(y_{\sigma}),$$
		for all $y_{\sigma} \in E_{\sigma,J}.$ Then, there is no common solution in $E^{*}_{\sigma, J}$ for the equations$${h_{1}}_{\vert_{E_{\sigma,J}}}=\cdots={h_{s}}_{\vert_{E_{\sigma,J}}}=0.$$
		Therefore, $\sup \vert h_i(y_{\sigma})\vert  \neq 0,\text{ for all } y_{\sigma} \in \pi^{-1}_{\sigma}(0).$
		
		Conversely, suppose that $I$ is degenerate. Then, there exist a compact face $\Delta$ of $\Poli(I)$ and a point $z^0$ in $(\C^{*})^n$ such that
		$$L(g_{1_{\Delta}})(z^0)=\cdots = L(g_{s_{\Delta}})(z^0)=0.$$
		Let $\sigma$ be an $n$-dimensional cone in $\Sigma$ and $a^1(\sigma),\dots, a^n(\sigma)$ corresponding primitive integers such that $$\Delta=\Delta_J = \bigcap_{i \in J} \Delta(a^i(\sigma)),$$
		for some $J \subset \{1,\dots,n\}$.
		Due to the definition of $\pi_\sigma$, there is $y^0 \in \C^n(\sigma)$ such that $\tilde{\phi}^{-1} \circ\pi_{\sigma}(y^0)=z^0$. Take $\overline{y}^0 \in E_{\sigma,J}$, where
		$$\overline{y_j}^0 =
		\begin{cases}
			y^0_j, & \text{if }j \notin J \\
			0, &  \text{if } j \in J.
		\end{cases}$$
		By Lemma \ref{lemma42} one has $\overline{y}^0 \in \pi^{-1}_{\sigma}(0)$, because $\Delta=\Delta_J$ is a compact face of $ \Poli(I)$. Therefore, $h_i(\overline{y}^0)=0$, for all $1\leq i\leq s$. We conclude that $\displaystyle\sup_{i}\{|h_i(\overline{y}^0)|\}=0$.
		
	\end{proof}

	\begin{remark}
		One can see $y_\sigma= 0\in E_{\sigma,J}$, where $J=\{1,\cdots,n\}$. Thus,
		$$h_i(0)= \sum_{v\in supp(g_i)\cap\Delta_J} a_v\cdot 0^{\langle a^1(\sigma),v\rangle-\ell(a^1(\sigma))}\cdots 0^{\langle a^n(\sigma),v\rangle-\ell(a^n(\sigma))}.$$
		
		Since $\Delta_J $ is a set of a single point, say $q$, then at least for some generator $g_i$, we have $h_i(0)=a_q \neq 0$. Therefore, $\sup_i\vert h_i(0) \vert \neq 0.$
	\end{remark}

	\begin{lemma}\label{lemma47}
		Let $I\subseteq \OX{S}$ be a non-degenerate ideal. Then, for each $m=(m_1,\dots,m_n) \in \Z^{n}_{+}$, $x^m \in \overline{I}$ if and only if the inequality 
		$$\ell(a^i(\sigma)) \leq \langle m_1b_1+\cdots+m_rb_r,a^i(\sigma)\rangle$$
		holds for all $1\leq i \leq n$ and all $n$-dimensional cone $\sigma\in \Sigma$.
	\end{lemma}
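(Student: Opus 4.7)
The proof is by double implication. Set $d := m_1b_1 + \cdots + m_rb_r \in S$, so that $\mathrm{supp}(x^m) = \{d\}$. The forward direction follows quickly: if $x^m \in \overline{I}$, then $d \in \mathrm{supp}(\overline{I}) \subseteq \Gamma_+(\overline{I}) = \Gamma_+(I)$ by Corollary \ref{corollary23}(ii), and by the half-space description of $\Gamma_+(I)$ given in Section 1, $\langle d, v\rangle \geq \ell(v, \Gamma_+(I))$ for all $v \in \coned{S}$, in particular for $v = a^i(\sigma)$.

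For the reverse direction, assume the inequalities $\ell(a^i(\sigma)) \leq \langle d, a^i(\sigma)\rangle$ hold for every $i$ and every $n$-dimensional $\sigma \in \Sigma$. By the Proposition preceding Lemma \ref{lemma45}, it suffices to produce a neighborhood $U$ of $0 \in X(S)$ and a constant $\mathcal{C} > 0$ such that
$$|x^m \circ \pi_\sigma(y_\sigma)| \leq \mathcal{C}\cdot\sup_i |g_i \circ \pi_\sigma(y_\sigma)|$$
for every $n$-dimensional $\sigma$ and every $y_\sigma \in \pi_\sigma^{-1}(U)$. A direct computation from the definition of $\pi_\sigma$ yields $x^m \circ \pi_\sigma(y_\sigma) = \prod_j y_{\sigma,j}^{\langle a^j(\sigma), d\rangle}$, while equation (\ref{eq4}) gives $g_i \circ \pi_\sigma(y_\sigma) = \prod_j y_{\sigma,j}^{\ell(a^j(\sigma))} h_i(y_\sigma)$. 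Hence
$$\frac{|x^m \circ \pi_\sigma(y_\sigma)|}{\sup_i|g_i \circ \pi_\sigma(y_\sigma)|} = \frac{\prod_j |y_{\sigma,j}|^{\langle a^j(\sigma), d\rangle - \ell(a^j(\sigma))}}{\sup_i|h_i(y_\sigma)|},$$
and the hypothesis makes every exponent in the numerator non-negative, so the numerator is bounded on bounded subsets of $\C^n(\sigma)$.

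The principal difficulty is producing a uniform positive lower bound for $\sup_i|h_i|$. By Lemma \ref{lemma45} and the non-degeneracy of $I$, this function is strictly positive at every point of each fiber $\pi_\sigma^{-1}(0)$. Since $\pi: X \to X(S)$ is proper, the fiber $\pi^{-1}(0)$ is compact in $X$ and is covered by finitely many of the charts $\C^n(\sigma)$; continuity of $\sup_i|h_i|$ on this finite cover therefore yields a positive constant $c$ bounding $\sup_i|h_i|$ from below on an open neighborhood $W$ of $\pi^{-1}(0)$ in $X$. Properness allows us to shrink $U$ so that $\pi^{-1}(\overline{U}) \subseteq W$, and then the monomial in the numerator is bounded on the relatively compact set $\pi^{-1}(\overline{U})$. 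Combining the two estimates produces the uniform constant $\mathcal{C}$, completing the proof.
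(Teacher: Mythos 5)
Your argument is correct and follows the same skeleton as the paper's proof: the forward implication via $\Gamma_{+}(\overline{I})=\Gamma_{+}(I)$ (Corollary \ref{corollary23}) together with the half-space description of the polyhedron, and the reverse implication via the toroidal criterion, the factorization (\ref{eq4}), the non-negativity of the exponents $\langle m_1b_1+\cdots+m_rb_r,a^{i}(\sigma)\rangle-\ell(a^{i}(\sigma))$, and Lemma \ref{lemma45}. Where you genuinely diverge is in how the uniform positive lower bound for $\sup_{i}|h_{i}|$ is produced: the paper argues chart by chart, taking a compact neighborhood $K$ of the origin of $\C^{n}(\sigma)$, bounding $\sup_{i}|h_{i}|$ below on $K$ by $d>0$, and then setting $U=\pi_{\sigma}(\mathrm{int}(K))$; you instead invoke properness of $\pi$, so that the whole exceptional fiber $\pi^{-1}(0)$ is compact, obtain a lower bound on a neighborhood $W$ of it, and shrink $U$ so that $\pi^{-1}(\overline{U})\subseteq W$. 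Your globalized version is arguably more robust: the points of $\pi_{\sigma}^{-1}(U)$ accumulate along all of $\pi_{\sigma}^{-1}(0)$, which is an unbounded union of coordinate subspaces $E_{\sigma,J}$ and not just a neighborhood of the chart origin, and the properness/closed-map argument controls all of them at once, a uniformity the paper's write-up leaves largely implicit. Two small points to tighten in your version: the germs $h_{i}$ are chart-dependent, so ``continuity of $\sup_{i}|h_{i}|$ on $X$'' should be phrased as a finite-cover argument giving one constant per $n$-dimensional cone (there are finitely many), and this suffices because both $|x^{m}\circ\pi_{\sigma}(y_{\sigma})|$ and $\sup_{i}|g_{i}\circ\pi_{\sigma}(y_{\sigma})|$ depend only on the image point $\pi_{\sigma}(y_{\sigma})$, so an estimate obtained at one preimage in one chart already yields the growth condition on $U$ and hence $x^{m}\in\overline{I}$ directly by Proposition \ref{prop21}.
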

	\begin{proof}
		If $x^m \in \overline{I}$ then $m_1b_1+\cdots+m_rb_r \in \Poli(\overline{I})$. Since $\Poli(\overline{I})= \Poli(I)$, it follows that $\ell(v)\leq \langle v,m_1b_1+\cdots+m_rb_r\rangle$, for all $v \in \coned {S}$. In particular, $$\ell(a^i(\sigma))\leq \langle a^i(\sigma),m_1b_1+\cdots+m_rb_r\rangle,$$
		for all $1 \leq i \leq n$ and all cones $\sigma \in \Sigma$ of dimension $n$.
		For any $n$-dimensional cone $\sigma \in \Sigma$, we have
		$$\sup_{i} \{|g_i| \circ\pi_{\sigma}(y_\sigma)\}=|y_{\sigma,1}^{\ell(a^{1}(\sigma))} \cdots y_{\sigma,n}^{\ell(a^{n}(\sigma))}| \cdot \sup_{i}\{\vert h_i(y_{\sigma})\vert\}$$
		and 
		$$|x^m| \circ \pi_{\sigma}(y_{\sigma})= \vert y_{\sigma,1}^{\langle m_1b_1+\cdots+m_rb_r,a^{1}(\sigma)\rangle} \cdots y_{\sigma,n}^{\langle m_1b_1+\cdots+m_rb_r,a^{n}(\sigma)\rangle}\vert=\vert y_{\sigma,1}^{M_1}\cdots y_{\sigma,n}^{M_n}\vert, $$
		where $M_i:=\langle m_1b_1+\cdots+m_rb_r,a^{i}(\sigma)\rangle$.
		
		Since  $I$ is a non-degenerate ideal, by Lemma \ref{lemma45} one has $\displaystyle\sup_{i}\{\vert h_i(y_{\sigma})\vert\}> 0$, for all $y_{\sigma} \in \pi_{\sigma}^{-1}(0)$.  Then, there exists $\varepsilon >0$ such that
		$$ \vert y_{\sigma,1}^{M_1}\cdots y_{\sigma,n}^{M_n}\vert <\varepsilon \cdot \frac{ \sup_{i} \{|g_i| \circ\pi_{\sigma}(y_\sigma)\}}
		{\sup_{i}\{\vert h_i(y_{\sigma})\vert\}}. $$
		Hence, there exists $W \subset \C^{n}(\sigma)$ a neighborhood of $0$ such that $\displaystyle\sup_i\{\vert h_i(y_\sigma)\vert\}>0$, for all $y_\sigma \in W.$
		
		Take $K \subset W$ a compact set with $0 \in int(K)$. By continuity, there exists $\tilde{y} \in K $ such that $\sup_i\{\vert h_i(y_\sigma)\vert\} \geq \sup_i\{\vert h_i(\tilde{y})\vert\}, $ for all $y_{\sigma} \in K$.
		Define $d:=\sup_i\{\vert h_i(\tilde{y})\vert\} >0$ and $U_0:= int(K)$. We have that $U_0$ is a neighborhood of $0$ and $\sup_i\{\vert h_i(y_\sigma)\vert\} \geq d $, for all $y_{\sigma} \in U_0$.
		
		Now, we take $U:=\pi_\sigma(U_0)$. This implies $\sup_i\{\vert h_i(y_\sigma)\vert\} \geq d, $ for all $y_{\sigma} \in \pi_{\sigma}^{-1}(U)$. Thus,
		$$ \vert y_{\sigma,1}^{M_1}\cdots y_{\sigma,n}^{M_n}\vert \leq \frac{\varepsilon}{d} \cdot \sup_{i} \{ |g_i| \circ\pi_{\sigma}(y_\sigma)\}.$$
		Therefore, $x^m \in \overline{I}$, by Proposition \ref{prop21}.
	\end{proof}
	
	Now we present our main result, characterizing the ideals in  $\mathcal{O}_{X(S)}$  whose integral closure is generated by monomials with exponents that belong to the Newton polyhedron defined in  $\text{cone} (S) \subseteq \mathbb{R}^{n}_{+} $, where  $S$ is a finitely generated semigroup in $ \mathbb{Z}^{n}_{+}$.
	
	\begin{theorem}\label{theo48}
		Let $I$ be an ideal in $\mathcal{O}_{X(S)}$. Then $I$ is non-degenerate if and only if $\Gamma_{+}(I)=C(\overline{I}).$
	\end{theorem}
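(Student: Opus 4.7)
The plan is to prove the two inclusions of $\Gamma_+(I)=C(\overline{I})$ separately, leveraging Lemmas \ref{lemma34}, \ref{lemma45}, and \ref{lemma47}. The inclusion $C(\overline{I})\subseteq\Gamma_+(I)$ is Lemma \ref{lemma34} and holds unconditionally, so the whole theorem reduces to relating the reverse inclusion $\Gamma_+(I)\subseteq C(\overline{I})$ to the non-degeneracy of $I$.

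For the forward direction, assume $I$ is non-degenerate and take an arbitrary vertex $v$ of $\Gamma_+(I)$. Since $v$ is an extreme point of $\Gamma_+(I)=\mathrm{conv}(supp(I))+cone(S)$, a short extremality argument forces the $cone(S)$-component to vanish, so $v\in supp(I)$ and hence $v=m_1b_1+\cdots+m_rb_r$ for some $m\in\N^r$ such that $x^m$ appears in a generator of $I$. Membership $v\in\Gamma_+(I)$ yields $\ell(a^i(\sigma))\leq\langle v,a^i(\sigma)\rangle$ for every $n$-dimensional $\sigma\in\Sigma$ and every $i$, so Lemma \ref{lemma47} gives $x^m\in\overline{I}$, and hence $v\in C(\overline{I})$. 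Because $\overline{I}$ is an ideal and is closed under multiplication by each $x^{e_j}$, the set $C(\overline{I})$ has recession cone equal to $cone(S)$. Since $\Gamma_+(I)$ coincides with the convex hull of its vertices plus $cone(S)$, this gives $\Gamma_+(I)\subseteq C(\overline{I})$.

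For the converse I would argue by contraposition. Assuming $I$ is degenerate, there exist a compact face $\Delta$ of $\Gamma_+(I)$ and $z^0\in(\C^{\ast})^n$ with $L((g_i)_{\Delta})(z^0)=0$ for every generator $g_i$ of $I$; mimicking the second half of the proof of Lemma \ref{lemma45} produces an $n$-dimensional cone $\sigma\in\Sigma$, a subset $J\subseteq\{1,\ldots,n\}$ with $\Delta=\Delta_J$, and a point $\overline{y}^0\in E^{*}_{\sigma,J}$ at which every $h_i$ vanishes. Choose a vertex $v$ of $\Delta$; it is also a vertex of $\Gamma_+(I)$, so the standing hypothesis $\Gamma_+(I)=C(\overline{I})$ together with extremality in the convex hull defining $C(\overline{I})$ forces $v=\lambda_1b_1+\cdots+\lambda_rb_r$ for some $\lambda$ with $x^\lambda\in\overline{I}$. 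Apply the growth characterisation of Proposition \ref{prop21} to $x^\lambda$ and pull back through $\pi_\sigma$ via (\ref{eq4}); cancelling the common factor $\prod_j|y_{\sigma,j}|^{\ell(a^j(\sigma))}$ on the open orbit leads to
\[
\prod_{j=1}^{n}|y_{\sigma,j}|^{M_j-L_j}\;\leq\;\mathcal{C}\,\sup_i|h_i(y_\sigma)|
\]
on a neighbourhood of $\overline{y}^0$ inside $(\C^{\ast})^n\subset\C^n(\sigma)$, where $M_j=\langle v,a^j(\sigma)\rangle$ and $L_j=\ell(a^j(\sigma))$. Since $v\in\Delta_J$ gives $M_j=L_j$ for $j\in J$ and $M_j\geq L_j$ for $j\notin J$, letting $y_\sigma\to\overline{y}^0$ sends the left-hand side to the strictly positive number $\prod_{j\notin J}|\overline{y}^0_j|^{M_j-L_j}$ while the right-hand side tends to $0$, a contradiction.

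The main obstacle is the limit step in the contrapositive: one must select the vertex $v$ on exactly the degenerate face $\Delta_J$ supplied by the hypothesis, since this is what guarantees $M_j=L_j$ for $j\in J$ (ruling out the indeterminate form $0^0$) and $M_j\geq L_j$ for $j\notin J$ (keeping the limit of the left-hand side strictly positive); the approach to $\overline{y}^0$ must also be performed from inside the open orbit $(\C^{\ast})^n$ so that the cancellation of $\prod_j|y_{\sigma,j}|^{L_j}$ is legitimate. The forward direction, by contrast, is essentially a bookkeeping consequence of Lemma \ref{lemma47} together with the elementary recession-cone computation for $C(\overline{I})$.
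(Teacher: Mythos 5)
Your proof is correct, and it follows the paper's overall strategy: both reduce the theorem, via Lemma \ref{lemma34}, to showing that non-degeneracy is equivalent to $\Gamma_{+}(I)\subseteq C(\overline{I})$, both get the forward inclusion from Lemma \ref{lemma47}, and both get the converse by pulling the growth condition of Proposition \ref{prop21} back through $\pi_{\sigma}$ and cancelling the monomial factor coming from (\ref{eq4}). The differences are in the execution. In the forward direction the paper treats an arbitrary integral point $m_1b_1+\cdots+m_rb_r\in\Gamma_{+}(I)$, passing from facet normals $v^s$ to the vectors $a^i(\sigma)$ via the decomposition $a^i(\sigma)=\sum_s\alpha_{is}v^s$; you instead restrict to vertices of $\Gamma_{+}(I)$ (for which the inequalities $\ell(a^i(\sigma))\leq\langle v,a^i(\sigma)\rangle$ are immediate since $a^i(\sigma)\in\coned{S}$) and then add the recession-cone observation that $C(\overline{I})$ absorbs $cone(S)$ because $\overline{I}$ is an ideal --- a step the paper leaves implicit, so your version is, if anything, more explicit about why lattice-point membership upgrades to the inclusion of polyhedra. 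In the converse the paper argues directly: for each $n$-dimensional $\sigma$ it uses the vertex $Q\in\bigcap_{i=1}^{n}\Delta(a^i(\sigma))$, so that all exponents cancel exactly, obtains the uniform bound $\sup_i|h_i|\geq 1/\mathcal{C}$ near $\pi_{\sigma}^{-1}(0)$, and quotes Lemma \ref{lemma45}; you argue by contraposition, re-running the second half of the proof of Lemma \ref{lemma45} to produce $\overline{y}^0\in E^{*}_{\sigma,J}$ with all $h_i(\overline{y}^0)=0$, choosing a vertex of the degenerate face $\Delta_J$, and reaching a pointwise contradiction in the limit through the open orbit; your bookkeeping $M_j=L_j$ for $j\in J$ and $M_j\geq L_j$ for $j\notin J$ is exactly what makes that limit legitimate. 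Both converses work; the paper's buys a clean appeal to Lemma \ref{lemma45} as a black box, while yours localises the contradiction at the degenerate face and avoids needing the uniform lower bound. One small point of hygiene: since non-degeneracy quantifies over generating systems, in your contrapositive you should fix a single generating system $\{g_i\}$, used simultaneously for the $h_i$ and for the growth condition (which Proposition \ref{prop21} guarantees for every choice of generators), and apply the degeneracy hypothesis to that system --- the same implicit convention the paper adopts in Lemma \ref{lemma45}.
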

	\begin{proof}
		In Lemma $\ref{lemma34}$, we proved the inclusion $C(\overline{I}) \subset \Gamma_{+}(I)$. So, it is necessary to prove that $I$ is non-degenerate if and only if $\Poli(I)\subset C(\overline{I})$.
		Observe that $m_1b_1+\cdots+m_rb_r \in \Gamma_{+}(I)$ if, and only if, the inequality
		\begin{equation}\label{7}
			\ell(v^s) \leq\langle m_1b_1+\cdots+m_rb_r,v^s\rangle
		\end{equation}
		holds for all primitive integers $v^s$ such that $\Delta(v^s)$ is a $(n-1)$-dimensional face of $\Gamma_{+}(I)$.
		Thus, it remains to prove the inequality holds if and only if the inequality
		\begin{equation}\label{8}
			\ell(a^i(\sigma)) \leq \langle m_1b_1+\cdots+m_rb_r,a^i(\sigma)\rangle
		\end{equation}
		is satisfied for all $1 \leq i \leq n$ and for every cone $\sigma \in \Sigma$ of dimension $n$.
		
		By the construction of $\Sigma$, the set of primitive vectors $v^s$ such that $\Delta(v^s)$ is an $(n-1)$-dimensional face of $\Poli(I)$ is the set $\bigcup_{\sigma \in \Sigma} \left\{a^i(\sigma)\right\}$. Thus, the inequality (\ref{7}) is valid for all $v^s$ if the inequality (\ref{8}) is valid for all $a^i(\sigma)$.
		
		For any $n$-dimensional cone $\sigma \in \Sigma$, let $\sigma_0$ be the $n$-dimensional cone of $\Sigma_0$ such that $\sigma \subset \sigma_0$. Then, there exists a set of primitive integer vectors $\{v^1,\dots,v^p\}$ such that
		$\sigma_0$ is a linear combination of $v^s$ with coefficients in $\R_{+}$. If the vectors $v^s$ are numbered appropriately, we can assume that $\displaystyle\sigma_0= \sum^{p}_{s=1}\R_{+}v^s.$
		It follows that $\displaystyle\bigcap^{p}_{s=1} \Delta(v^s)=\bigcap^{n}_{i=1} \Delta(a^i (\sigma))$ is an one-point set in $\R^{n}_{+}$‚ say $q=(q_1,\dots,q_n)$. Thus, there are non-negative integers $\alpha_{is}$, $1\leq i \leq n$, such that $a^i(\sigma)= \sum^{p}_{s=1} \alpha_{is }v^s$.
		
		Since $q$ is a vertex of $\Gamma_{+}(I)$, we have $ \langle q,v^s \rangle = \ell(v^s)$ and $ \langle q,a^ i(\sigma) \rangle= \ell(a^i(\sigma))$,
		for $1\leq s\leq p $ and $1\leq i\leq n$. Hence, $\ell(a^i(\sigma))= \sum^{p}_{s=1} \alpha_{is}l(v^s).$
		If the inequality $\ell(v^s)\leq \langle m_1b_1+\cdots+m_rb_r, v^s\rangle$ holds, then
		$$ \ell(a^i(\sigma))= \sum^{p}_{s=1} \alpha_{is}l(v^s)\leq\sum^{p}_{s=1} \alpha_{is}\langle m_1b_1+\cdots+m_rb_r,v^s \rangle =\langle m_1b_1+\cdots+m_rb_r,a^i(\sigma) \rangle.$$

		\noindent Hence, by Lemma \ref{lemma47}, $x^m \in \overline{I}$. Therefore, $m_1b_1+\cdots+m_rb_r \in C(\overline{I})$.
		
		On the other hand, consider $\sigma$ an $n$-dimensional cone of $\Sigma$. Then, there exists a vertex $Q=q_1b_1+\cdots+q_rb_r$ of $\Poli(I)$ such that $Q \in \bigcap^{n}_{i=1} \Delta(a^i(\sigma) )$. Since $C(\overline{I}) = \Gamma_{+}(I)$, each vertex $Q$ of $ \Gamma_{+}(I)$ is a vertex of $C(\overline{I}) $. By the definition of $C(\overline{I})$, it follows that $x^q \in \overline{I}$, where $q=(q_1,\dots,q_n)$.
		
		Therefore, there exists a constant $\mathcal{C}>0$ such that $$|x^q| \leq \mathcal{C}\cdot \sup_{i} \{ |g_i(x)|\}$$ in a neighborhood $U$ of the origin in $X(S)$, where $g_1,g_2, \dots, g_s$ are generators of $I$.
		
		Hence, for all $y_{\sigma}\in \pi_{\sigma}^{-1}(U)$, we have
		\begin{eqnarray*}
			|x^q|  \circ \pi_{\sigma}(y_{\sigma})= |y_{\sigma,1}^{M_1} \cdots y_{\sigma,n}^{M_n}| &\leq& \mathcal{C} \cdot\sup_{i}\{ |g_i| \circ \pi_{\sigma}(y_{\sigma})\}\\&=& \mathcal{C}\cdot |y_{\sigma,1}^{\ell(a^1(\sigma))} \cdots y_{\sigma,n}^{\ell(a^n(\sigma))}| \cdot \sup_{i} \{|h_i(y_{\sigma})|\}, 
		\end{eqnarray*}
		where $M_i= \langle Q,a^{i}(\sigma)\rangle=\ell(a^i(\sigma))$.
		
		Then, $\sup_{i} \{|h_i(y_{\sigma})|\}> 0$ in a neighborhood of $\pi_{\sigma}^{-1}(0)$. Therefore, by Lemma $\ref{lemma45}$, the ideal $I$ is non-degenerate.
	\end{proof}
	
	\begin{corollary}\label{corollary48}
		Let $I$ be an ideal in $\mathcal{O}_{X(S)}$. Then $I$ is non-degenerate if and only if $\overline{I}=I^{\circ}.$
	\end{corollary}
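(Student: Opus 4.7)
The plan is to leverage Theorem~\ref{theo48}, which already identifies non-degeneracy of $I$ with the equality $\Gamma_+(I)=C(\overline{I})$, and to combine it with Corollary~\ref{corollary23}, which supplies $\overline{I}\subseteq I^\circ$ and $\Gamma_+(\overline{I})=\Gamma_+(I)$. This reduces everything to two steps: upgrading $\overline{I}\subseteq I^\circ$ to equality when $I$ is non-degenerate, and extracting $\Gamma_+(I)=C(\overline{I})$ from $\overline{I}=I^\circ$.

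For the forward direction, I would start from $I$ non-degenerate and observe that $\overline{I}\subseteq I^\circ$ is automatic. To establish the reverse inclusion, I would pick an arbitrary monomial generator $x^k$ of $I^\circ$, characterized by $k_1b_1+\cdots+k_rb_r\in\Gamma_+(I)$. This membership translates into $\ell(v)\leq\langle k_1b_1+\cdots+k_rb_r,v\rangle$ for all $v\in\coned{S}$, hence in particular for $v=a^i(\sigma)$ with $1\leq i\leq n$ and every $n$-dimensional cone $\sigma\in\Sigma$. Lemma~\ref{lemma47} (which crucially uses non-degeneracy) then produces $x^k\in\overline{I}$, yielding $I^\circ\subseteq\overline{I}$.

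For the converse, I would assume $\overline{I}=I^\circ$ and aim to verify $\Gamma_+(I)=C(\overline{I})$, which by Theorem~\ref{theo48} is equivalent to non-degeneracy. One inclusion, $C(\overline{I})\subseteq\Gamma_+(I)$, is already provided by Lemma~\ref{lemma34}. For the other, I would argue vertex-by-vertex: each vertex $Q$ of $\Gamma_+(I)$ is a support point of a generator of $I$, hence of the form $Q=k_1b_1+\cdots+k_rb_r$ with $k\in\mathbb{N}^r$. By definition of $I^\circ$, we have $x^k\in I^\circ=\overline{I}$, so $Q\in C(\overline{I})$. Since $C(\overline{I})=\Gamma_+(K_I)$ is itself a Newton polyhedron and therefore contains $Q+cone(S)$ for every such vertex, while $\Gamma_+(I)$ is the convex hull of its vertices together with $cone(S)$, the containment $\Gamma_+(I)\subseteq C(\overline{I})$ follows.

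The trickiest step I expect is the converse: it is essential that the vertices of $\Gamma_+(I)$ can be realized as points of the form $k_1b_1+\cdots+k_rb_r$ with $k\in\mathbb{N}^r$ (so that they correspond to actual monomials $x^k\in\mathcal{O}_{X(S)}$), and then that the vertex-level inclusions propagate to the entire polyhedron via the recession-cone structure of $C(\overline{I})$. Once that bookkeeping is in place, Theorem~\ref{theo48} closes the argument immediately.
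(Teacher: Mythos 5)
Your proof is correct, but it takes a somewhat different route than the paper's. The paper handles both directions through Theorem \ref{theo48} together with the observation that $K_I$ (the ideal generated by all monomials in $\overline{I}$) coincides with $I^{\circ}$: for the forward direction it rewrites $I^{\circ}$ as the ideal generated by the monomials whose exponents land in $C(\overline{I})=\Gamma_{+}(I)$, and for the converse it notes that $\overline{I}=I^{\circ}$ forces $K_I=I^{\circ}$, whence $C(\overline{I})=\Gamma_{+}(K_I)=\Gamma_{+}(I^{\circ})=\Gamma_{+}(I)$ in one line. You instead prove the forward inclusion $I^{\circ}\subseteq\overline{I}$ by feeding the half-space characterization of $\Gamma_{+}(I)$ directly into Lemma \ref{lemma47}; this is in fact the mechanism hiding behind the paper's terse equality $\langle x^k : k_1b_1+\cdots+k_rb_r\in C(\overline{I})\rangle=\langle x^k: x^k\in\overline{I}\rangle$, so your version makes that step more explicit. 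For the converse you replace the paper's identity $K_I=I^{\circ}$ by a vertex-by-vertex argument (vertices of $\Gamma_{+}(I)$ lie in $supp(I)$, hence give monomials of $I^{\circ}=\overline{I}$, and the recession-cone structure propagates the inclusion), which is longer but relies on exactly the same vertex fact the paper itself uses inside the proof of Theorem \ref{theo48}; the paper's route avoids this bookkeeping but implicitly uses $\Gamma_{+}(I^{\circ})=\Gamma_{+}(I)$ from Corollary \ref{corollary23}. Both arguments are sound and stay within the paper's toolkit.
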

	\begin{proof}
		If $\Poli(I)=C(\overline{I})$ then $I^{\circ}=\langle x^k :k_1 b_1+\cdots+ k_r b_r \in C(\overline{I})\rangle=\langle x^k : x^k \in \overline{I}\rangle$. However, $\overline{I}=I^{\circ}.$ Conversely, if $\overline{I}=I^{\circ}$ then $K_I=\langle x^m : x^m \in I^{\circ} \rangle = I^{\circ}$. Thus, we have 
		$C(\overline{I})=\Poli(k_I)=\Poli(I^{\circ})=\Poli(I).$ Therefore, the result follows from Theorem \ref{theo48}.
	\end{proof}
	
	\begin{example}
		Let $S=\langle (1,0),(1,1),(1,2) \rangle$. Consider  $I \subset \mathcal{O}_{X(S)}$ the ideal generated by $f=x^4y+z^2 $ and $g=xyz+x^2y^3$. Then, $supp(I)=\{(5,1),(2,4),(3,3),(5,3)\}.$ Thus, the Newton polyhedron of $I$ is represented in the Figure \ref{Fig5}
		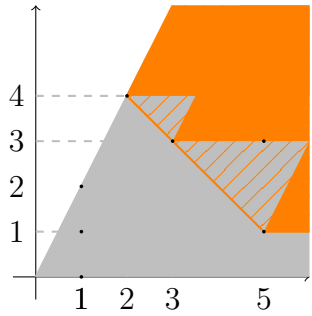
\begin{figure}[H]
			\centering
			
			\begin{tikzpicture}[scale=0.6]
				% eixos
				\draw[->] (-0.5,0) -- (6,0);
				\draw[->] (0,-0.5) -- (0,6);

				\draw[lightgray, thick] (0,0) -- (2.5,5);
				
				\fill[lightgray, opacity=0.2](0,0) -- (3,6) -- (6,6) --(6,0)  --cycle;
				%ponto
				\fill[black] (1,2) circle (0.4mm) ;
				\fill[black] (1,1) circle (0.4mm) ;

				\fill[black] (0,2) node[left] {$2$};
				\fill[black] (0,4) node[left] {$4$};
				\fill[black] (0,3) node[left] {$3$};
				
				\fill[black] (0,1) node[left] {$1$};
				\fill[black] (1,0) circle (0.4mm) node[below] {$1$};
				\fill[black] (1,0)  node[below] {$1$};
				\fill[black] (2,0)  node[below] {$2$};
				\fill[black] (3,0)  node[below] {$3$};
				\fill[black] (5,0)  node[below] {$5$};

				\draw[orange, thick] (2,4) -- (3,6);
				\draw[orange, thick] (2,4) -- (5,1);
				\draw[orange, thick] (5,1) -- (6,1);
				\fill[orange, opacity=0.2] (2,4) -- (3,6) -- (6,6) -- (6,4)--cycle;
				\fill[orange, opacity=0.2] (3,3) -- (4.5,6) -- (6,6) -- (6,3)--cycle;
				\fill[orange, opacity=0.2] (5,1) -- (6,3) -- (6,1) --cycle;
				\pattern[pattern={hatch[hatch size=7pt, hatch linewidth=.2pt, hatch angle=90]} , pattern color=orange] (2,4) -- (3,6) --(6,6)--(6,1)--(5,1) --cycle;

				\draw[lightgray,  thick, dashed] (0,4) -- (2,4); 
				\draw[lightgray,  thick, dashed] (2,0) -- (2,4);
				\fill[black] (2,4) circle (0.4mm);
				
				\draw[lightgray,  thick, dashed] (3,0) -- (3,3);
				\draw[lightgray,  thick, dashed] (0,3) -- (3,3);
				\fill[black] (3,3) circle (0.4mm) ;
				
				\draw[lightgray,  thick, dashed] (5,0) -- (5,1);
				\draw[lightgray,  thick, dashed] (0,1) -- (5,1);
				\fill[black] (5,1) circle (0.4mm);
				
				\fill[black] (5,3) circle (0.4mm);
				
			\end{tikzpicture}
			\caption{Newton polyhedron of  $I$.}
			\label{Fig5}
		\end{figure}
		
		We observe that $I$ is non-degenerate. Then, we have that the integral closure of $I$ is generated by the monomials $x^{k}$ such that
		$k_{1}b_{1}+\cdots+k_{r}b_{r} \in \Poli(I)$, by Theorem \ref{theo48}.
	\end{example}

Following this, we present an example addressing non-degenerate ideals in relation to Whitney equisingularity. Before that, we will recall the definition pertaining  to the distance between linear subspaces.

Suppose $A$, $B$ are linear subspaces at the origin in $\C^{n}$, then 
$$dist(A,B) = \sup_{ u\in B^{\perp}-\{0\}, v\in A-\{0\}}
\frac{|(u,v)|}{\left\| u \right\|
	\left\| v \right\|},$$
	where $B^{\perp}$ denotes the orthogonal space of $B$.

\begin{definition} Let $X$, $Y$ be strata in a stratification of a complex analytic space, satisfying $\overline{X}\supset Y$. Then the pair $(X,Y)$ satisfies the Verdier's condition $W$ at $0 \in Y$ if there exists a positive real number $C$ such that
	$$dist(T_{0}Y, T_{x}X) \leq \mathcal{C}dist(x,Y),$$
	for all $x$ close to $Y$.
\end{definition}

In the complex analytic context, Verdier in \cite{Verdier1976} and Teissier in \cite{Teissier1982} proved the equivalence between the Verdier's condition $W$ at $0$ and Whitney's conditions (a) and (b) at $0$.

Teissier shows the importance of the integral dependence relation in the determinacy of the Whitney conditions in the case  of a family of analytic hypersufarces \cite{Teissier1982}, and Gaffney generalized Teissier's result for any codimension \cite{Gaffney1992} in the following result.

\begin{theorem}\textup{\cite{Gaffney1992}}\label{Theorem51}
	Let $X$ be a complex analytic, reduced, purely $d$ dimensional space, $Y$ an analytic space of $X$ purely of dimension $t$, $0$ a smooth point of $Y$, and $X_0$ the set of smooth points on $X$. Let $F:\C^t \times \C^{r} \to \C^{p}$ be coordinates chosen so that $\C^{t} \times \{0\} = Y$, $F$ defines $X$ with reduced structure. Then $\frac{\partial F}{\partial y_l} \in \overline{\big\{x_i \frac{\partial F}{\partial x_j}\big\}\mathcal{O}_{X}^{p}}$ for all $l=1,\dots,t$, in which $i,j=1, \dots, r$ if, and only if, $(X_0,Y)$ satisfies Verdier's condition $W$.
\end{theorem}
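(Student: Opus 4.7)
The plan is to translate both sides of the equivalence into pointwise analytic inequalities in a neighborhood of $0\in X$, and to bridge them via the Jacobian of $F$ at smooth points $z\in X_0$. On the algebraic side, I would use the module analog of the growth-condition characterization (Proposition~\ref{prop21}(ii)): the hypothesis $\partial F/\partial y_l \in \overline{\{x_i\partial F/\partial x_j\}\mathcal O_X^p}$ is equivalent to the existence of a neighborhood $U$ of $0$ in $X$ and $C>0$ such that
\[
\bigl\|(\partial F/\partial y_l)(z)\bigr\| \le C\,\|z^{(x)}\|\cdot\max_{j}\bigl\|(\partial F/\partial x_j)(z)\bigr\|
\]
for every $z\in U$. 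On the geometric side, at a smooth $z\in X_0$ one has $T_zX_0=\ker dF_z$, so $(T_zX_0)^\perp$ is spanned by the gradients of the $F_k$, and the orthogonal projection formula yields
\[
\mathrm{dist}(T_0Y,T_zX_0)^2 = \sup_{v\in T_0Y,\,\|v\|=1} \bigl(dF_z(v)\bigr)^{*}\bigl(dF_zdF_z^{*}\bigr)^{+}dF_z(v).
\]
Since $dF_z(e_l)=(\partial F/\partial y_l)(z)$ for the standard basis vector $e_l$ of $T_0Y$, the quantity $\mathrm{dist}(T_0Y,T_zX_0)$ is a weighted norm of the $y$-derivatives of $F$, with weights coming from the $x$-derivatives.

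For the direction $(\Rightarrow)$ I would argue pointwise. Writing $dF_z=(A\mid B)$ with $A$ the $y$-block and $B$ the $x$-block, the identity $F\equiv0$ on $Y$ forces $A|_Y=0$, and the growth bound propagates this vanishing on $X$ near $Y$ to give $AA^{*}\preceq BB^{*}$, hence $dF_zdF_z^{*}\preceq 2BB^{*}$, so $(dF_zdF_z^{*})^{+}\succeq\tfrac12(BB^{*})^{+}$. Plugging into the projection formula and using the growth inequality yields $\mathrm{dist}(T_0Y,T_zX_0)\le C'\,\|z^{(x)}\|$, which is exactly Verdier's condition $W$.

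For the direction $(\Leftarrow)$ I would argue by contradiction via the valuative criterion (Proposition~\ref{prop21}(iii)). If some $\partial F/\partial y_l$ were not in the stated integral closure, the curve selection lemma produces an analytic arc $\varphi:(\C,0)\to(X,0)$, with $\varphi(t)\in X_0$ for $t\ne 0$, along which
\[
v_t\bigl((\partial F/\partial y_l)\circ\varphi\bigr)<\min_{i,j}v_t\bigl((x_i\,\partial F/\partial x_j)\circ\varphi\bigr),
\]
where $v_t$ is the $t$-adic valuation on $\mathcal{O}_1$. Using the Jacobian formula for the distance along $\varphi$, one then shows that $\mathrm{dist}(T_0Y,T_{\varphi(t)}X_0)$ has strictly smaller $t$-order than $\mathrm{dist}(\varphi(t),Y)=\|\varphi^{(x)}(t)\|$, contradicting condition $W$ at $0$.

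The main obstacle, shared by both directions, is the quantitative comparison between the weighted projection norm arising from $(dF_zdF_z^{*})^{+}$ and the max-norm $\max_j\|\partial F/\partial x_j\|$ that appears in the growth bound. Concretely, one must control the smallest nonzero singular value of the $x$-block $B$ by $\max_j\|\partial F/\partial x_j\|$ along smooth points of $X$ (or along a generic analytic arc). This amounts to a quantitative transversality of the $x$-coordinate direction to $T_zX_0$ near $Y$, and is the main linear-algebra input required to close the equivalence without extra hypotheses on $F$.
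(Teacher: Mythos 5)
First, a point of comparison: the paper does not prove this statement at all --- Theorem \ref{Theorem51} is quoted from Gaffney \cite{Gaffney1992} (with Teissier's hypersurface case as the ancestor) and is used only as an input for Example \ref{ExW}. So your proposal must be measured against Gaffney's proof in the literature, which rests on his theory of integral closure of \emph{modules}: membership $h\in\overline{M}$ for $M\subseteq\mathcal{O}_X^p$ is characterized by the valuative criterion along curves or, equivalently, by norm estimates tested against all covectors $\lambda\in(\C^p)^\ast$ (i.e.\ $|\lambda(h(z))|\le C\sup_i|\lambda(g_i(z))|$), and condition $W$ is obtained by pairing limiting conormal covectors with curves.

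This is exactly where your argument has a genuine gap. Your first translation step replaces $\frac{\partial F}{\partial y_l}\in\overline{\{x_i\frac{\partial F}{\partial x_j}\}\mathcal{O}_X^p}$ by the sup-of-norms inequality $\|\frac{\partial F}{\partial y_l}(z)\|\le C\|z^{(x)}\|\max_j\|\frac{\partial F}{\partial x_j}(z)\|$. For $p>1$ this is \emph{not} equivalent to membership in the integral closure of a module: it is necessary but far from sufficient (e.g.\ $M=\langle(x,0),(0,y)\rangle\subset\mathcal{O}_2^2$ and $h=(y,0)$ satisfy the sup-norm bound but $h\notin\overline{M}$, as the curve $t\mapsto(t^2,t)$ shows). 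The geometric side makes the mismatch fatal: $\mathrm{dist}(T_0Y,T_zX_0)$ involves \emph{arbitrary linear combinations} $\sum_k\lambda_k\,dF_k(z)$ of the rows of $dF_z$, so to get condition $W$ one needs $\|\sum_k\lambda_k\frac{\partial F_k}{\partial y}(z)\|\le C\|z^{(x)}\|\,\|\sum_k\lambda_k\frac{\partial F_k}{\partial x}(z)\|$ for every $\lambda$ --- precisely the covector-wise criterion --- and cancellation in the $x$-block for particular $\lambda$ means this cannot be deduced from the single inequality with $\max_j\|\frac{\partial F}{\partial x_j}\|$ on the right. The ``main obstacle'' you flag at the end (bounding the smallest nonzero singular value of the $x$-block by $\max_j\|\frac{\partial F}{\partial x_j}\|$) is not a technical point to be closed later: it is false in general, and overcoming it is the content of Gaffney's module-theoretic machinery. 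In addition, the linear-algebra bridge in your forward direction is incorrect as stated: the scalar growth bound does not yield $AA^{\ast}\preceq BB^{\ast}$ (at best it bounds $\|A\|$ by $\|z^{(x)}\|\|B\|$), and Moore--Penrose pseudoinverses are not order-reversing for the Loewner order when ranges differ, so $(dF_zdF_z^{\ast})^{+}\succeq\tfrac12(BB^{\ast})^{+}$ does not follow. The backward direction via curve selection is closer in spirit to the actual argument, but it too must produce a curve \emph{together with} a covector (a limiting conormal) witnessing failure of the covector-wise estimate, not merely a valuation inequality against the sup of the generators.
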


Using the notation of Gaffney's result and Theorem \ref{theo48} we present the next example.

 \begin{example}\label{ExW}
	Let $F:\C \times \C^{2} \to \C$ be defined by $F(y_{1},x_{1},x_{2})=x_{1}^{4}y_{1}^{5}-x_{2}^{3}$. Note that $F^{-1}(0)=X(S)$, where $S=\langle (1,3), (1,0),(3,5)\rangle$. Consider the ideal $I_F$ generated by $\big\{x_i \frac{\partial F}{\partial x_j}\big\}$, i.e., 
	$$I_F=\langle 4x_{1}^{4}y_{1}^{5},4x_{1}^{3}x_{2}y_{1}^{5},-3x_{1}x_{2}^{2},-3x_{2}^{3}\rangle.$$ Thus, we have $supp(I_F)=\langle (9,15),(11,20),(7,10)\rangle$. Then, the Newton polyhedron of $I_F$ is represented in the Figure \ref{Figex}.
	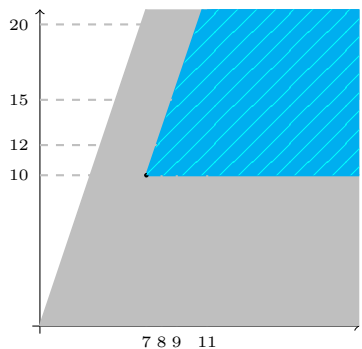
\begin{figure}[H]
		\centering
		
		\begin{tikzpicture}[scale=0.2]
			% eixos
			\draw[->] (-0.5,0) -- (21,0);
			\draw[->] (0,-0.5) -- (0,21);

			\draw[lightgray, thick] (0,0) -- (7,21);
			
			\fill[lightgray, opacity=0.2](0,0) -- (7,21) -- (21,21) --(21,0)  --cycle;

			\fill[black] (0,20) node[left] {$\scriptscriptstyle 20$};
			\fill[black] (0,15) node[left] {$\scriptscriptstyle 15$};
			\fill[black] (0,12) node[left] {$\scriptscriptstyle 12$};
			\fill[black] (0,10) node[left] {$\scriptscriptstyle 10$};

			\fill[black] (9,0)  node[below] {$\scriptscriptstyle 9$};
			\fill[black] (7,0)  node[below] {$\scriptscriptstyle 7$};
			\fill[black] (8,0)  node[below] {$\scriptscriptstyle 8$};
			\fill[black] (11,0)  node[below] {$\scriptscriptstyle 11$};

			\draw[cyan, thick] (7,10) -- (10.7,21);
			\draw[cyan, thick] (7,10) -- (21,10);
			
			\draw[lightgray,  thick, dashed] (0,15) -- (9,15); 
			\draw[lightgray,  thick, dashed] (9,0) -- (9,15);
			\fill[black] (9,15) circle (1.5mm) ;
			\draw[lightgray,  thick, dashed] (0,20) -- (11,20); 
			\draw[lightgray,  thick, dashed] (11,0) -- (11,20);
			\fill[black] (11,20) circle (1.5mm) ;
			\draw[lightgray,  thick, dashed] (0,10) -- (7,10) ; 
			\draw[lightgray,  thick, dashed] (7,0) -- (7,10) ;
			\fill[black] (7,10) circle (1.5mm) ;
			\draw[lightgray,  thick, dashed] (0,12) -- (8,12) ; 
			\draw[lightgray,  thick, dashed] (8,0) -- (8,12) ;
			\fill[red] (8,12) circle (2mm) ;

			\fill[cyan, opacity=0.2] (9,15) -- (11,21) -- (21,21) -- (21,15)--cycle;
			\fill[cyan, opacity=0.2] (7,10) -- (10.7,21) -- (21,21) -- (21,10)--cycle;
			\fill[cyan, opacity=0.2] (11,20) -- (11.3,21) -- (21,21) -- (21,20)--cycle;
			\pattern[pattern={hatch[hatch size=7pt, hatch linewidth=.2pt, hatch angle=90]} , pattern color=cyan]  (7,10) -- (10.7,21) -- (21,21) -- (21,10)--cycle;

		\end{tikzpicture}
		\caption{Newton polyhedron of  $I_F$.}
		\label{Figex}
	\end{figure}
	Consider the compact face $\Delta=\{(7,10)\}$ and the polynomials \begin{eqnarray*}
		L(4x_{1}^{4}y_{1}^{5})_{\Delta}= 0 &\ \ &  L(4x_{1}^{3}x_{2}y_{1}^{5})_{\Delta}= 0\\
		L(-3x_{1}x_{2}^{2})_{\Delta}= z_{1}^{7}z_{2}^{10} &\ \ & L(-3x_{2}^{3})_{\Delta}= 0
	\end{eqnarray*}
	
	Note that $$L(4x_{1}^{4}y_{1}^{5})_{\Delta}=  L(4x_{1}^{3}x_{2}y_{1}^{5})_{\Delta}= L(-3x_{1}x_{2}^{2})_{\Delta}= L(-3x_{2}^{3})_{\Delta}=0$$
	have no common solution in $(\C^{\ast})^{2}$. Then, $I_F$ is non-degenerate.
	Thus, since $(8,12) \in \Gamma_{+}(I_F)$, it follows from the Theorem \ref{theo48} that
	$$ \frac{\partial F}{\partial y_{1}}=5x_{1}^{4}y_{1}^{4} \in \overline{I_F}.$$
	Therefore, the smooth part of $X(S)$ and $\C\times 0$ satisfies Verdier's condition $W$, by Theorem \ref{Theorem51}.
\end{example}

	\section*{Acknowledgements}
	
	The authors are very grateful to Carles Bivià-Ausina, who in one of his visits to São Carlos gave us the idea of studying this kind of problem using the combinatory from the toric varieties and semigroups.
	
	Amanda S. Araújo is supported by CAPES grant number 88887.827300/2023-00. Tha\'is M. Dalbelo is supported by FAPESP-Grants 2019/21181-0 and 2024/22060-0 and by CNPq grant 403959/2023-3. Thiago da Silva is funded by CAPES grant number 88887.909401/2023-00 and CAPES grant number 88887.897201/2023-00.

	\bibliographystyle{plain}
	\bibliography{references}
	
	\vspace{2cm}

	\textsc{Amanda S. Araújo (Federal University of São Carlos)}
	
	 amandaaraujo@estudante.ufscar.br
	 
	 \vspace{0.5cm}
	
	\textsc{Thaís M. Dalbelo (Federal University of São Carlos)}
	
	thaisdalbelo@ufscar.br
	
	\vspace{0.5cm}
	
	\textsc{Thiago da Silva (Federal University of Espírito Santo)}
	
	thiago.silva@ufes.br

\end{document}